\documentclass[11pt]{article}
\usepackage{enumitem}
\usepackage{amsmath, amssymb, amsthm, dsfont}
\numberwithin{equation}{section}
\usepackage{stmaryrd}
\usepackage{geometry}
\usepackage{graphicx}
\usepackage{float}
\usepackage{wrapfig}
\usepackage{subcaption}
\usepackage[table]{xcolor}
\usepackage{multirow}
\usepackage{tikz}
\usetikzlibrary{arrows.meta, positioning, decorations.pathreplacing, calc, decorations.markings, tikzmark,calligraphy,shadows.blur,fit,backgrounds}
\usepackage{hyperref}
\usepackage{cleveref}
\crefname{enumi}{}{} 

\usepackage{authblk}
\usepackage[toc,page]{appendix}
\usepackage[backend=biber, url=false, doi=true, isbn=false, sorting=none, citestyle=numeric-comp]{biblatex}
\def\Aa{\emph{A. albopictus}}

\definecolor{redLight}{HTML}{F7EDED}
\definecolor{purpleLight}{HTML}{EFEDF7}
\definecolor{redDark}{HTML}{3D093B}
\definecolor{redDLight}{HTML}{F6C1F4}
\definecolor{blueDark}{HTML}{0B093D}
\definecolor{blueDLight}{HTML}{C3C1F6}
\definecolor{purpleDark}{HTML}{213D09}
\definecolor{purpleDLight}{HTML}{DAF6C1}
\definecolor{aquamarine}{rgb}{0.13, 0.68, 0.8} 
\definecolor{dkorange}{RGB}{214, 141, 51} 
\definecolor{maroon}{RGB}{128, 0, 0}

\tikzset{
    state/.style={
                draw,
                rounded corners,
                minimum width=3.4cm,
                minimum height=1cm,
                align=center
            },
    latent/.style={
        minimum width=3.4cm,
        minimum height=1cm,
        align=center
            },
    cstate/.style={
        draw,
        rounded corners,
        minimum width=3.4cm,
        minimum height=1cm,
        align=center,
        fill=Blue!80
        },
    box/.style={
        minimum width=0.5cm,
        minimum height=0.5cm,
        align=center},
    arrow/.style={->, line width=1.2pt, -{Stealth[length=6pt,width=8pt]}},
    arrowrev/.style={<-, line width=1.2pt, {Stealth[length=6pt,width=8pt]}-},
    rotatedtitle/.style={
        rotate=0,
        draw=#1,
        text=#1,
        fill=white,
        fill opacity=0.7,
        line width=1pt,
        rounded corners=2pt,
        inner xsep=8pt,
        inner ysep=4pt,
        font=\bfseries\scriptsize,
        align=center,
        minimum width=15.7cm,
        minimum height=1cm
            }
    }

\title{When vector abundance constrains vector-borne transmission: inside epidemic dynamics within a prescribed population}
\author{Lise Isnel, Julien Papaïx, Olivier Bonnefon, Samuel Soubeyrand, Lionel Roques}
\date{\vspace{-20pt}INRAE, BioSP, 84914 Avignon, France}

\makeatletter
\newtheorem*{rep@theorem}{\rep@title}
\newcommand{\newreptheorem}[2]{%
\newenvironment{rep#1}[1]{%
 \def\rep@title{#2 \ref{##1}}%
 \begin{rep@theorem}}%
 {\end{rep@theorem}}}
\makeatother

\newtheorem{theorem}{Theorem}[section]
\newtheorem{proposition}{Proposition}[theorem]

\newtheorem{remark}{Remark}
\newreptheorem{proposition}{Proposition}

\begin{document}

\maketitle

\paragraph{Abstract}

The global burden of vector-borne diseases continues to rise, presenting escalating challenges to public health and agricultural systems. Their expanding geographic range and associated impacts underscore the need for tools to project disease dynamics and evaluate interventions. Mechanistic compartmental host–vector models provide such a framework by explicitly representing transmission processes.
However, while these models often couple vector population dynamics with infection spread, the inherent complexity of vector demographic processes calls for a more modular approach.

In this study, we consider the setting in which the total vector abundance is an externally-prescribed input to a general vector-mediated transmission model. Our objectives are threefold: (i) to provide theoretical guarantees for ensuring consistency between the prescribed total vector abundance and the dynamics of the model compartments, (ii) to characterize model properties related to the local reproduction index (informing extinction or emergence near the time of introduction) and the inter-seasonal propagation, and (iii) to derive insights for diseases transmitted by \textit{Aedes albopictus} in mainland France.
We apply this framework by feeding the epidemiological model with vector-abundance data obtained either from entomological surveillance systems or from an external detailed population dynamic model. Moreover, epidemiological national surveillance data are used to estimate the transmission parameter.
Based on this, we derive conditions for emergence and extinction, and show how the extent of the epidemic depends on introduction timing, transmission rate, and susceptible population density.

Taken together, these results highlight the value of generating and leveraging entomological surveillance data as well as their model-based counterparts when observations are unavailable (which is particularly relevant in the context of global warming and for the evaluation of vector-control strategies). Overall, we establish a refined framework integrating such data or forecast into compartmental models, with strong potential for a wide range of applications in animal, human or plant health.

\paragraph{Keywords} vector-borne disease, mathematical epidemiology, compartmental model, non-autonomous dynamical system, inter-seasonal transmission, entomological surveillance, vector population dynamics, chikungunya virus

\newpage
\section{Introduction}

\begin{figure}[htbp]
    \centering
    \pgfdeclarelayer{background}
    \pgfdeclarelayer{middle}
    \pgfdeclarelayer{foreground}
    
    \pgfsetlayers{background,middle,main,foreground}
    \vspace*{-20pt}
    \begin{tikzpicture}[
             >=latex,
             node distance=2cm,
             scale=0.9, transform shape
            ]
        
        
        \node[latent, font=\itshape\small] (input_entomo) at (-8.2,7.1)
        {entomological surveillance data\\\includegraphics[width=0.4\textwidth]{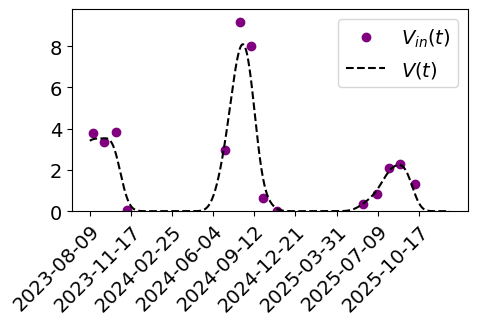}};
        
        \node[latent, font=\itshape\small] (output_entomo_V) at (-8.2,-4.9)
        {\includegraphics[width=0.4\textwidth]{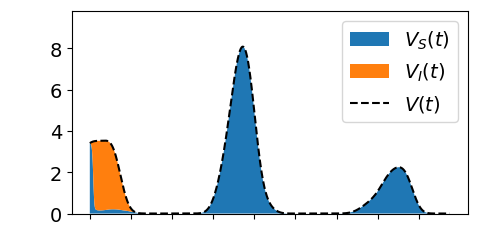}\\epidemiological dynamics\\in the vector compartments};
        
        \node[latent, font=\itshape\small] (output_entomo_H) at (-8.2,-9.4)
        {\includegraphics[width=0.4\textwidth]{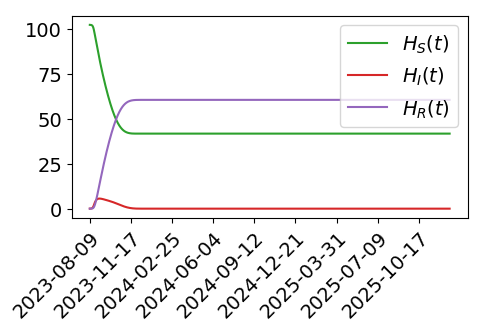}};

        \node[latent, font=\itshape\small] (legend_output_entomo_H) at ($(output_entomo_H.south) + (0,-0.4)$)
        {epidemiological dynamics\\in the host compartments};
            
        \node[latent, text=black!90] (or) at (-4.5,7)
        {OR};
        
        \node[latent, font=\itshape] (input_disti) at (1.4,6.9)
        {\includegraphics[width=0.62\textwidth, clip, trim={0 0 0 0}]{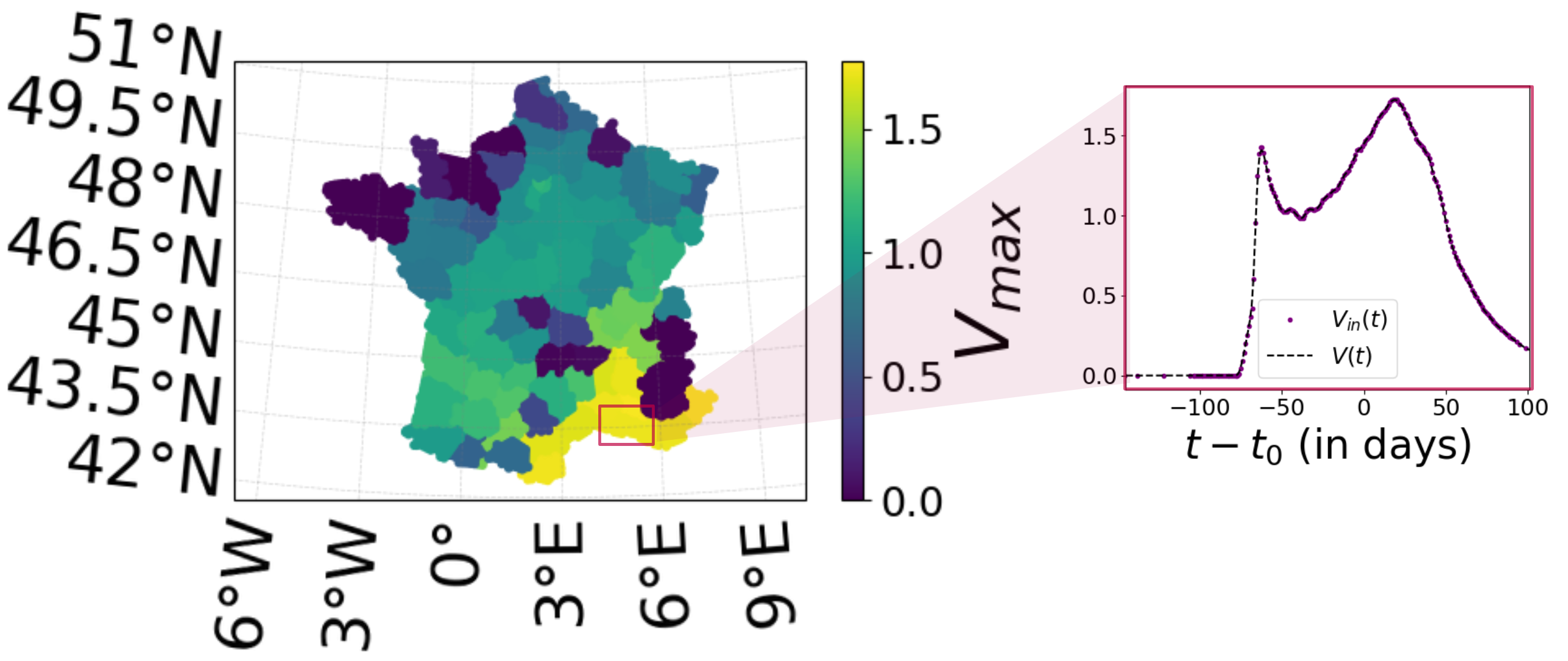}};
        
        \node[latent, font=\itshape\small] (legend_input_disti) at ($(input_disti.north)+(-1.9,0.3)$)
        {output from an external\\vector population dynamic model};
        
        \node[latent, font=\itshape] (output_disti_V) at (1.4,-7.4)
        {\includegraphics[width=0.68\textwidth]{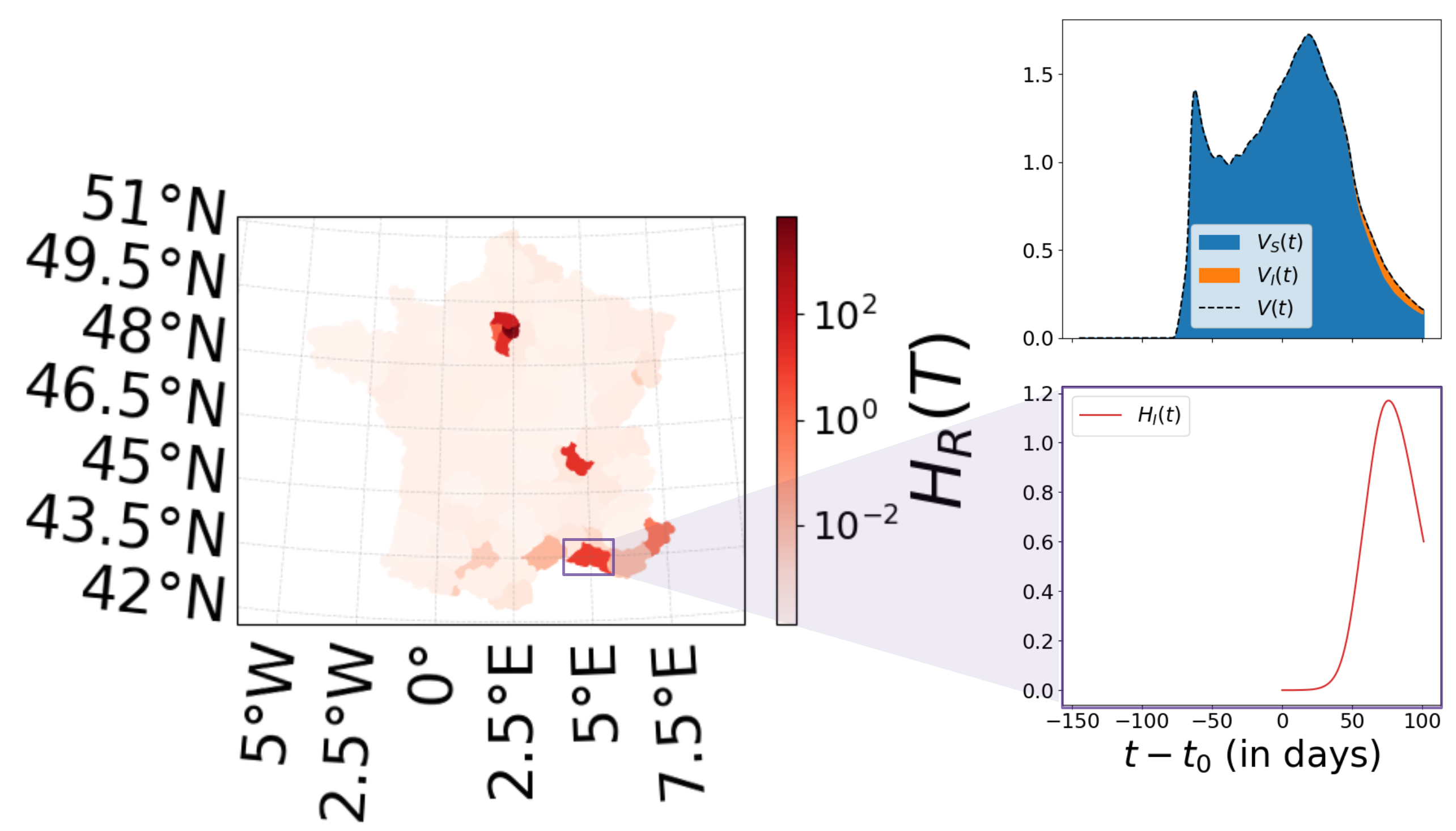}};

        \node[latent, font=\itshape\small] (legend_output_disti_V) at ($(output_disti_V.south)+(-1.9,-0.5)$)
        {total amount of infected cases at $t=T$\\in 94 different territories};


        \node[rotatedtitle=redDark, above=2.55cm of or.north, font=\small\bfseries] (exog_input) {\Large EXOGENEOUS INPUT\\[2pt]Vector population dynamic};

        \node [rotatedtitle=blueDark, below=6.8cm of exog_input.north, font=\small\bfseries] (hostvector_model) {\Large HOST-VECTOR MODEL\\[2pt]compartmental model};
        
        \node [rotatedtitle=purpleDark, below=5.3cm of hostvector_model.north, font=\small\bfseries] {\Large HOST-VECTOR MODEL OUTPUTS\\[2pt]Epidemic / epizootic dynamic};
        
        
        \node[color=redDark, font=\footnotesize\bfseries, align=center] (V) at ($(hostvector_model.south) + (0,-18pt)$) {\Large$V(t)$\\$V_S(t) + V_I(t) = V(t)$};
        
        \def\bodywidth{0.1}
        \def\anglearrowtip{75}
        \pgfmathsetmacro{\lengthtip}{\bodywidth / tan(\anglearrowtip/2)}
        
         \begin{pgfonlayer}{middle}
            \path[
                shade,
                shading=axis,
                top color=redDLight,
                bottom color=blueDLight,
                line width=0.8pt,
                opacity=0.45
                ]
            ($(input_entomo.south)+(-\bodywidth,0.25)$) -- ($(input_entomo.south)+(\bodywidth,0.25)$) -- ($(input_entomo |- V.north west) + (\bodywidth,\bodywidth-0.37)$) -- ($(V.north west) + (-\lengthtip+0.85,\bodywidth-0.37)$) -- ($(V.north west)+(0.85,-0.37)$) -- ($(V.north west) + (-\lengthtip+0.85,-\bodywidth-0.37)$) -- ($(input_entomo |- V.north west) + (-\bodywidth,-\bodywidth-0.37)$) -- cycle;

            \path[
                shade,
                shading=axis,
                top color=redDLight,
                bottom color=blueDLight,
                line width=0.8pt,
                opacity=0.45
                ]
            ($(input_disti.south)+(-\bodywidth-1.5,0.25)$) -- ($(input_disti.south)+(\bodywidth-1.5,0.25)$) -- ($(input_disti |- V.north east) + (\bodywidth-1.5,-\bodywidth-0.37)$) -- ($(V.north east) + (\lengthtip-0.96,-\bodywidth-0.37)$) -- ($(V.north east)+(-0.96,-0.37)$) -- ($(V.north east) + (\lengthtip-0.96,\bodywidth-0.37)$) -- ($(input_disti |- V.north east) + (-\bodywidth-1.5,\bodywidth-0.37)$) -- cycle;

            \path[
                shade,
                shading=axis,
                top color=blueDLight,
                bottom color=purpleDLight,
                line width=0.8pt,
                opacity=0.45
                ]
            ($(V.west) + (-\bodywidth+0.85,-2.91)$) -- ($(V.west) + (+0.85,-2.91 -\lengthtip)$) -- ($(V.west) + (\bodywidth+0.85,-2.91)$) -- ($(V.west |- output_entomo_V.north) + (\bodywidth+0.85,0)$) -- ($(output_entomo_H.west |- output_entomo_V.north) + (-\lengthtip,0)$)  -- ($(output_entomo_V.west -| output_entomo_H.west) + (-\lengthtip,\bodywidth)$) -- ($(output_entomo_V.west) + (-\lengthtip,\bodywidth)$) -- (output_entomo_V.west) -- ($(output_entomo_V.west) + (-\lengthtip,-\bodywidth)$) -- ($(output_entomo_V.west -| output_entomo_H.west) + (-\lengthtip,-\bodywidth)$) -- ($(output_entomo_H.west) + (-\lengthtip,\bodywidth)$) -- ($(output_entomo_H.west) + (0,0)$) -- ($(output_entomo_H.west) + (-\lengthtip,-\bodywidth)$) -- ($(output_entomo_H.west) + (-\lengthtip,-\bodywidth)$) -- ($(output_entomo_H.west) + (-\lengthtip-2*\bodywidth,-\bodywidth)$) -- ($(output_entomo_H.west |- output_entomo_V.north) + (-\lengthtip-2*\bodywidth,2*\bodywidth)$) -- ($(V.west |- output_entomo_V.north) + (-\bodywidth+0.85,2*\bodywidth)$) -- cycle;

            \path[
                shade,
                shading=axis,
                top color=blueDLight,
                bottom color=purpleDLight,
                line width=0.8pt,
                opacity=0.45
                ]
            ($(V.east) + (-\bodywidth-0.89,-2.91)$) --
            ($(V.east) + (-0.89,-2.91-\lengthtip)$) -- ($(V.east) + (\bodywidth-0.89,-2.91)$) --
            ($(V.east |- output_entomo_V.north) + (\bodywidth-0.89,2*\bodywidth)$) --
            ($(output_disti_V.north |- output_entomo_V.north) + (-0.97 + \bodywidth,2*\bodywidth)$) -- ($(output_disti_V.north) + (-0.97 + \bodywidth,2*\bodywidth-0.6)$) --($(output_disti_V.north) + (-0.97,-0.6-\lengthtip)$) -- ($(output_disti_V.north) + (-0.97 - \bodywidth,2*\bodywidth-0.6)$) -- ($(output_disti_V.north |- output_entomo_V.north) + (-0.97 - \bodywidth,0)$) -- ($(V.east |- output_entomo_V.north) + (-\bodywidth-0.89,0)$) --  cycle;

        \end{pgfonlayer}
        
        
        \begin{pgfonlayer}{foreground}
            \node[box] (VS) at ($(V) + (-2,-0.9)$) {$V_S(t)$};
            \node[box] (VI) at ($(V) + (2,-0.9)$) {$V_I(t)$};
        
            \draw[arrow, olive!90] ($(VS.east)+(0.15,0)$) -- ($(VI.west)+(-0.15,0)$) node[midway, below=0.02cm, olive!90] (arrowV) {}; 
        
            \node[box, below=0.7cm of arrowV] (HI) {$\mathbf{H_I(t)}$};
            \node[box, right=2.78cm of HI] (HS) {$H_S(t)$};
            
            \draw[arrow, olive!90] ($(HS.west)+(-0.15,-0.1)$) -- ($(HI.east)+(0.15,-0.1)$) node[midway, above=0.06cm, olive!90] (arrowH) {};
        
            \draw[arrow, olive!40, dashdotted] ($(HI.north)+(0,0.05)$) -- ($(arrowV.center)+(0,-0.05)$);
            \draw[arrow, olive!40, dashdotted] ($(VI.south)-(0,0.05)$) -- ($(arrowH.center)+(0,0.05)$);
            
            \draw ($(VI.north east)+(-0.07,-0.15)$) -- ($(VI.north east) + (-0.07,0.1)$) -- ($(VS.north west)+(0.05,0.1)$) -- ($(VS.north west)+(0.05,-0.15)$);

            \draw[arrowrev, purple!50, densely dashed] ($(VI.east)+(0.7,0.08)$) -- ($(VI.east)+(0,0.08)$) node[midway] (arrowOutVI) {};
            
            \draw[arrow, violet!80, densely dotted] ($(VI.east)+(
            0.7,-0.08)$) -- ($(VI.east)+(0,-0.08)$) node[midway] (arrowInVI) {};

            \coordinate (rVI) at ($(arrowOutVI)!0.5!(arrowInVI) + (2cm,0)$);
            
            \node[violet!1!purple] at (rVI) {}; 
            
            \draw[arrow, purple!50, densely dashed] ($(VS.west)-(0,-0.08)$) -- ($(VS.west)-(0.7,-0.08)$);
        
            \draw[arrow, violet!80, densely dotted] ($(VS.west)+(-0.7,-0.08)$) -- ($(VS.west)+(0,-0.08)$)
                node[midway, violet!1!purple, above=-0.1cm] (r) {};
        
        
        \end{pgfonlayer}

        \begin{scope}[on background layer]
        \node[
            darkgray,
            draw=white,
            fill=white,
            rounded corners=1pt,
            line width=0.01pt,
            align=left,
            inner sep=5pt,
            font=\footnotesize,
            anchor=north west
        ] (legend) at ($(VS.south west)+(-4.9,0.5)$)
        {
            \textbf{Legend}\\[2pt]
        
            \tikz{\draw[
                arrow,
                olive!90,
            ] (0,0) -- (0.5,0);}
            \qquad incidence\\[0.2pt]
        
            \tikz{\draw[
                arrow, olive!40, dashdotted,
            ] (0,0) -- (0.5,0);}
            \qquad cross-contribution to incidence\\[0.2pt]
        
            \tikz{\draw[
                arrow, thick, purple!50, densely dashed, shading=axis,
            ] (0,0) -- (0.5,0);}
            \qquad removal (e.g. death)\\[0.2pt]
        
            \tikz{\draw[
                arrow, thick, densely dotted, violet!80,
            ] (0,0) -- (0.5,0);}
            \qquad recruitment (e.g. birth)\\[0.2pt]
            
        };
        \end{scope}
        
        \end{tikzpicture}

    \caption{\textit{Diagram illustrating the prescriptive approach, adopted here, taking the vector population dynamic as an exogenous input to the host-vector compartmental model. Different vector abundance datasets can be used as inputs, for instance entomological surveillance data (left column) or output from an external vector population dynamic model (right column).}}
    \label{fig:graph_intro}
\end{figure}

Classical mechanistic models are typically formulated as closed dynamical systems, where every state variable evolves according to internally prescribed equations.
Yet, in many systems, some components are already observed or reliably modelled in a separate framework. In such cases, it can be natural and advantageous to replace their endogenous dynamics with an observed or independently modelled trajectory, treating it as an external forcing. We refer to this as the \textit{prescriptive approach}, in which we formulate the inside dynamic within a prescribed trajectory. This retains the mechanistic structure of the model while allowing well-characterized components to enter directly through data.

This issue has been recognised in climate modelling, where atmospheric models are commonly driven by prescribed sea-surface temperatures. Wittenberg and Anderson showed that even an error-free prescribed ocean trajectory can lead the forced atmospheric subsystem towards dynamics that are inconsistent with those of the fully coupled system, illustrating that replacing an endogenous component by an external trajectory is not a neutral operation \cite{wittenberg_dynamical_1998}.
The key challenge is then to determine when such a hybrid formulation is mathematically consistent and compatible with the dynamic being modelled.

In other settings, the prescribed quantity is treated as exogenous to the process of interest. In population genetics, for example, genealogical or allele-frequency dynamics can be studied conditional on a specified demographic history or time-varying effective population size \cite{griffiths_sampling_1994,zivkovic_transition_2015, alamil2022}. In these formulations, the prescribed demographic trajectory acts as an external time-dependent input that modulates the genetic process, for instance by determining coalescence rates or the strength of genetic drift, while the genetic state does not dynamically feed back on the population-size trajectory.
Mathematical epidemiology provides another natural setting in which an exogenous process can be prescribed, particularly in compartmental models describing pathogen transmission between hosts and vectors \cite{reiner_systematic_2013}.
Building on the seminal Ross-Macdonald framework, compartmental host-vector models commonly incorporate environmental variables, such as temperature and precipitation, as time-dependent drivers of vector development, survival, activity, or pathogen transmission \cite{greenman_external_2004,huber_seasonal_2018,chowell_assessing_2019,ewing_novel_2021}. In this work, however, we use the term \textit{prescriptive approach} to refer to situations in which the prescribed quantity is not exogenous to the process of interest, but rather constitutes a state variable of the system itself.

In their classical form, compartmental host-vector models generally adopt what we refer to here as a \textit{joint approach}: vector demography and pathogen transmission are represented within the same dynamical system. Total vector abundance and its epidemiological compartments are therefore generated jointly by the model. This formulation has generated major theoretical and applied insights and remains central to the analysis of vector-borne diseases \cite{brass_role_2024,ewing_novel_2021,chowell_assessing_2019,demers_dynamic_2018,tuncer_bird_nodate,cousins_modelling_2019,moore_predators_2010,simpson_vector_2012, martin2021}. However, it may be restrictive when vector abundance has already been characterised independently of the epidemiological model. Indeed, entomological surveillance can provide repeated observations of vector population dynamics; for example, the European Food Safety Authority (EFSA) and the European Centre for Disease Prevention and Control (ECDC) share entomological surveillance data for main vectors of pathogen in animal and human health (ticks, mosquitoes, sand flies, and biting midges) \cite{braks_vectornet_2022}. Additional entomological surveillance data are available from national surveillance programmes, research networks, and dedicated vector-monitoring platforms \cite{hammami_spatio-temporal_2025, da_re_vectabundance_2024, leta_updating_2019, zhang_dataset_2019}. Alternatively, vector abundance can be generated by dedicated population-dynamics models that integrate processes such as temperature-dependent development and mortality, photoperiod, diapause, density dependence, dispersal, or landscape structure. Such models also allow prospective approaches that are particularly relevant in the context of global warming and for the evaluation of vector-control strategies. For instance, Pasquali et al. developed a stage-structured mechanistic model of the abundance and seasonal activity of \Aa\ in Europe \cite{pasquali_development_2020}, while Cuéllar et al. predicted the monthly average abundance of \textit{Culicoides} biting midges at a 1 km$^2$ resolution using Random Forests machine learning \cite{cuellar_modelling_2020}. Hence, the prescriptive approach appears more promising because of its modularity, enabling to feed vector-abundance data into the epidemic model and to evaluate the impacts of global warming and of vector-control strategies.

A closer precedent is provided by Laneri et al., who showed that classical malaria transmission models can be reformulated as non-autonomous systems in which mosquito abundance acts as an external forcing \cite{laneri_forcing_2010}. They subsequently adopted a parsimonious formulation in which vector dynamics are represented implicitly through the force of infection, thereby avoiding explicit representation of mosquito abundance and of susceptible and infected vector compartments. Building on this idea, we ask whether total vector abundance $V(t)$ can be prescribed externally while retaining explicit susceptible and infectious vector compartments within that population.

Doing so raises a consistency question that does not arise when vector dynamics are represented only implicitly.
At each time, the susceptible and infectious vector compartments must partition the externally prescribed population $V(t)$ while remaining compatible with its trajectory.
Thus, $V(t)$ is not only an input to the epidemiological model but also a time-varying constraint on its admissible state space. The central question is therefore under which conditions an externally prescribed vector abundance trajectory can be embedded into a compartmental host-vector model without violating its mathematical or biological consistency.
Moreover, because $V(t)$ is externally prescribed and generally varies through time, the resulting conditional epidemiological system is non-autonomous. Classical threshold quantities derived for autonomous host-vector models cannot be interpreted as global invasion criteria any more, and reproduction numbers require specific treatment in periodically varying vector populations \cite{bacaer_epidemic_2006}.
Finally, strongly seasonal vector dynamics raise an additional practical difficulty. When vector abundance becomes very low or vanishes during part of the year, deterministic compartmental models may retain arbitrarily small infectious host populations that can subsequently seed renewed transmission when vectors return, producing artificial inter-seasonal persistence in the absence of pathogen reintroduction.

The objective of this work is therefore to provide a rigorous framework for incorporating externally prescribed vector abundance into compartmental host-vector epidemic models. We determine conditions under which a prescribed non-negative trajectory $V(t)$ is compatible with the vector infection dynamics, thereby ensuring mathematical well-posedness and biological admissibility of the resulting conditional system. We then examine the interpretation of epidemic invasion in a time-varying environment and introduce a low-density correction designed to prevent unrealistic deterministic persistence across periods of vector absence. More broadly, the framework provides a mathematically explicit interface through which vector abundance obtained from observations or independent population-dynamics models can be incorporated into mechanistic transmission models.

We first introduce the main analytical results in biological terms in Section~\ref{sec:motivation}, before developing the full mathematical framework in Section~\ref{sec:well_posed_framework}. We then apply the framework to \textit{Aedes albopictus}-borne chikungunya transmission in mainland France (Section~\ref{sec:Aa-human-model}). Prescribed vector abundance is obtained either from entomological surveillance based on ovitrap counts of \Aa\ eggs or from DISTIGRI, a spatio-temporal model of \Aa\ population dynamics \cite{roques_dynamics_2023}. The application illustrates how externally supplied vector dynamics can be used to reconstruct infection dynamics within the vector population, investigate how introduction timing and vector seasonality shape epidemic outcomes, and explore epidemiological conditions across locations. It is motivated by the increasing frequency, geographic extent, and magnitude of chikungunya outbreaks in Europe \cite{menegale_risk_2025}.

\section{The challenge of prescribing a vector abundance in epidemic models} \label{sec:motivation}

Four fundamental questions arise when considering embedding a vector abundance trajectory $V(t)$ within a vector-borne disease model:
\begin{enumerate}
    \item what properties must a prescribed vector abundance trajectory
    $V(t)$ satisfy to be used as an epidemiological input?
    \item what constraints must the epidemiological partition of the
    prescribed vector abundance satisfy?
    \item what are the minimal modelling ingredients required to construct a well-posed host-vector model conditioned on a prescribed $V(t)$?
    \item if the amount of adult vectors experiences periodic episodes of absence (e.g. due to winter mortality), how to ensure epidemic fade out if V(t) is sufficiently small on a long enough time period? \end{enumerate}
By addressing these four questions, we aim to provide a rigorous foundation for integrating externally observed or modelled vector dynamics into compartmental models of vector-borne disease transmission. 
In this section, we highlight guiding principles that ensure the resulting models are both biologically relevant and mathematically reliable. The associated mathematical framework is then introduced in Section \ref{sec:well_posed_framework}.

\

In what follows, $V(t)$ denotes the total abundance of epidemiologically active vectors at time $t$. Depending on
the data source and modelling scale, it may represent an absolute number of
vectors, a vector density, or a relative abundance index obtained through a
fixed normalisation. It may originate from empirical time-series observations, such as entomological surveillance data, or from the output of an independent population-dynamics model.   The vector compartments $V_S(t)$ and $V_I(t)$ are
expressed on the same scale as $V(t)$, so that
$V_S(t)+V_I(t)=V(t)$ retains the same interpretation under any of these
representations. The numerical values and units of the transmission
coefficients consequently depend on the scale chosen for $V(t)$. We assume that $V(t)$ is prescribed over a time interval $[t_0,T]$, where $t_0$ denotes the time at which the pathogen is introduced and $T$ the final time horizon considered.

\subsection{What properties must a prescribed vector abundance satisfy?}

The first question concerns the prescribed trajectory itself. To be used as
an input to the host-vector epidemic model, $V(t)$ must satisfy several
regularity and compatibility requirements.

At a minimal level, the prescribed trajectory must satisfy $V(t)\geq0$ and have sufficient temporal regularity for the differential-equation formulation. If $V(t)$ is obtained from a theoretical population model, this regularity may be inherited directly. If $V(t)$ is obtained from empirical observations, however, additional care is needed. Entomological surveillance data are typically sparse, noisy, and collected at discrete times. Direct interpolation may introduce abrupt changes that are incompatible with the vector demography. The prescribed abundance must therefore satisfy not only a regularity condition but also a demographic compatibility condition derived below; when necessary, we enforce it through the regularisation procedure of Appendix~\ref{apx:data_reg}.

\subsection{What constraints must the epidemiological partition satisfy?}
Given a prescribed vector abundance trajectory satisfying the above
requirements, the next step is to identify the constraints that its
epidemiological partition must satisfy.
The central idea is that an externally prescribed vector abundance should not be regarded as an arbitrary external input. Rather, it defines the overall population within which the susceptible and infectious vector compartments must be allotted and evolve. To ensure biological consistency, this constraint is enforced by requiring conservation of the total population together with the non-negativity of all epidemiological compartments.

Decomposing the total vector abundance into two epidemiological states, susceptible vectors $V_S(t)$ and infectious vectors $V_I(t)$, population conservation requires
\begin{equation}\label{eq:Vpop_conservation}
V_S(t)+ V_I(t) =V(t),
\end{equation}
throughout the time interval of interest. 
This constraint expresses the fact that the infection process cannot create vectors outside the prescribed population. Newly infectious vectors can only arise from susceptible vectors already present in the externally imposed abundance $V(t)$. 

In addition, both epidemiological vector compartments must remain non-negative:
\begin{equation}\label{eq:contrainte_positivite}
V_S(t) \ge 0 \quad \text{and}\quad V_I(t) \ge 0.
\end{equation}

Together with equation~\eqref{eq:Vpop_conservation}, this is equivalent to requiring
$$
0\leq V_I(t)\leq V(t).$$If this condition is violated, the model may produce negative susceptible vectors or more infectious vectors than vectors available, both of which are biologically inadmissible.

\subsection{What are the minimal ingredients required to construct the conditional model?} \label{subsec:min_vect_subsyst}

Once the prescribed trajectory and its epidemiological partition have been
specified, the final question is how to construct the corresponding
host-vector model. The framework developed below relies on a small number of modelling ingredients.

\paragraph{First, the host subsystem must be well-posed} on the time interval $[t_0,T]$, with existence and non-negativity of host compartments. The specific structure of the host model may vary according to the biological system considered. For example, it may involve a simple susceptible-infectious-recovered structure, or a more detailed description including latent, symptomatic, asymptomatic, immune, or removed host classes.

\paragraph{Second, the model must specify how infectious hosts generate infectious vectors.}

As illustrated by Figure \ref{fig:cycle_vect_transmission}, vector-borne diseases are mediated by cyclic pathogen transmission between host and vector populations across two successive infection steps: the infection of a susceptible vector following its encounter with an infected host (belonging to the infectious host substate $H_I(t)$), which, in turn, becomes infectious and likely to transmit the pathogen to susceptible hosts (represented by the susceptible host substate $H_S(t)$).
Transmission is driven by an infection pressure from hosts to vectors, denoted $\lambda_{HV}(H_I)$, which determines the rate at which susceptible vectors become infected through contact with infectious hosts. 
The corresponding incidence of newly infectious vectors is
\begin{equation}\label{eq:incidence_vectors}
\lambda_{HV}  \left(H_I(t)\right)\,V_S(t).
\end{equation}

The infection pressure mainly encodes for the contact rate between infectious hosts and susceptible vectors. It has the dimension of a per-capita rate (time$^{-1}$). 
In classical host-vector models, as illustrated in Table \ref{tab:lambda_hv_illustrate}, this contact process may be represented as density-dependent or frequency-dependent \cite{borremans_shape_2017, moore_predators_2010, morel_comparative_2008}. More detailed formulations may also incorporate vector activity, host protection measures, or environmental covariates such as land use or vegetation cover.

The precise form of $\lambda_{HV}$ is model-dependent. The framework developed here does not impose a specific functional form for $\lambda_{HV}$; it only requires this infection pressure to satisfy the regularity and positivity assumptions stated in Section \ref{sec:well_posed_framework}.

\begin{table}[h]
    \centering
    \begin{tabular}{|c|c|c|}
    \hline
         $\lambda_{HV}$ expression & modelling objective & source  \\\hline\hline
         \small $\lambda_{HV}(H_I) = \beta_{hv} H_I$ & model density-dependent & \cite{cousins_modelling_2019, moore_predators_2010, morel_comparative_2008} \\
            &  transmission that scales with host & \\
            & number or density & \\[0.5em]
         \small $\lambda_{HV}(H_I) = \beta_{hv} \frac{H_I}{H}$ & model frequency-dependent & \cite{brass_role_2024, ewing_novel_2021, demers_dynamic_2018, moore_predators_2010}  \\
            & transmission that scales & \cite{simpson_vector_2012, morel_comparative_2008, chamchod_analysis_2011, buonomo_stability_2013, gao_prevention_2016} \\
            & with frequency of infection& \\
            & in the host population & \\[0.5em]
        \small $\lambda_{HV}(H_I)
                    = \beta_{hv} H_I ( \dfrac{H_I}{H_\star} \, \mathbf 1_{0<H_I<H_\star} + \mathbf 1_{H_I\geq H_\star} )$ & shrinking adjusted infection &  see Section \ref{subsec:shrinking-adj} \\[-4pt]
                    & pressure associated with & \\
                    & density-dependant transmission & \\
        \hline
    \end{tabular}
    \caption{\textit{Examples of possible formulations of the infection pressure from host to vector $\lambda_{HV}$, depending on the modelling objectives. $\beta_{hv}$ denotes the baseline transmission coefficient from infectious hosts to susceptible vectors. $H$ denotes the total number of hosts. $H_\star$ corresponds to an infected host threshold under which shrinking adjustment is applied (usually corresponds to one unity).}}
    \label{tab:lambda_hv_illustrate}
\end{table}

\paragraph{Third, the model must specify the net demographic contribution to the infectious-vector compartment.}
This term describes how mortality, removal, or vertical transmission affect vectors that have entered the infectious class.

We denote by $r(t)$, the net demographic growth rate of the infectious vector compartment.
This term may encode mortality or removal, in which case $r(t)\leq 0$. If vertical transmission is considered, $r(t)$ may also include recruitment into the infectious-vector class and may therefore be positive on some time intervals. Examples of possible choices for $r(t)$ are given in Appendix~\ref{apx:r(t)}.
Again, the framework developed here does not impose a specific functional form for this net demographic growth rate. However, as stated in Section \ref{sec:well_posed_framework}, it is linked with the prescribed vector abundance through a compatibility condition: the prescribed abundance trajectory cannot decline faster than is compatible with the assumed mortality process; similarly, it must grow at least as fast as prescribed by the birth process.

The main feature influencing this net demographic rate for arthropods is often temperature, as it directly scales both vector survival rate and longevity. Consequently, its inclusion in models may be particularly relevant \cite{delatte_influence_2009, grimaud_modelling_2019, wennergren_population_1993, doeurk_impact_2025}.

\paragraph{Resulting minimal conditional model.}
The constrained dynamic of infectious vectors can then be expressed as a sum of these infection flow and demographic processes:$$V_I'(t)
=
\lambda_{HV}  \left(H_I(t)\right)\,V_S(t)
+
r(t)\,V_I(t)$$

\noindent where, here, we assume exponential variation for the demographic dynamics of infectious vectors; that is, in absence of infection process, the infected vector population would grow or decline at a rate proportional to its current size.

Equivalently, using the population conservation constraint \eqref{eq:Vpop_conservation}, the vector sub-system dynamic can be expressed as
\begin{equation}\label{eq:FV_constrained}
\left\{
\begin{aligned}
V_S'(t) &= V'(t)-V_I'(t) \\
V_I'(t) &= \lambda_{HV}  \left(H_I(t)\right)\,\bigl(V(t)-V_I(t)\bigr)
+
r(t)\,V_I(t)
\end{aligned}
\right.
\end{equation}
where the equation on $V_S(t)$ is not an independent one, but rather ensures that the susceptible and infectious vector compartments always sum to the externally prescribed total abundance.

\begin{figure}[h]
    \centering
    \begin{subfigure}{.44\textwidth}
        \centering
        \includegraphics[width=.99\linewidth]{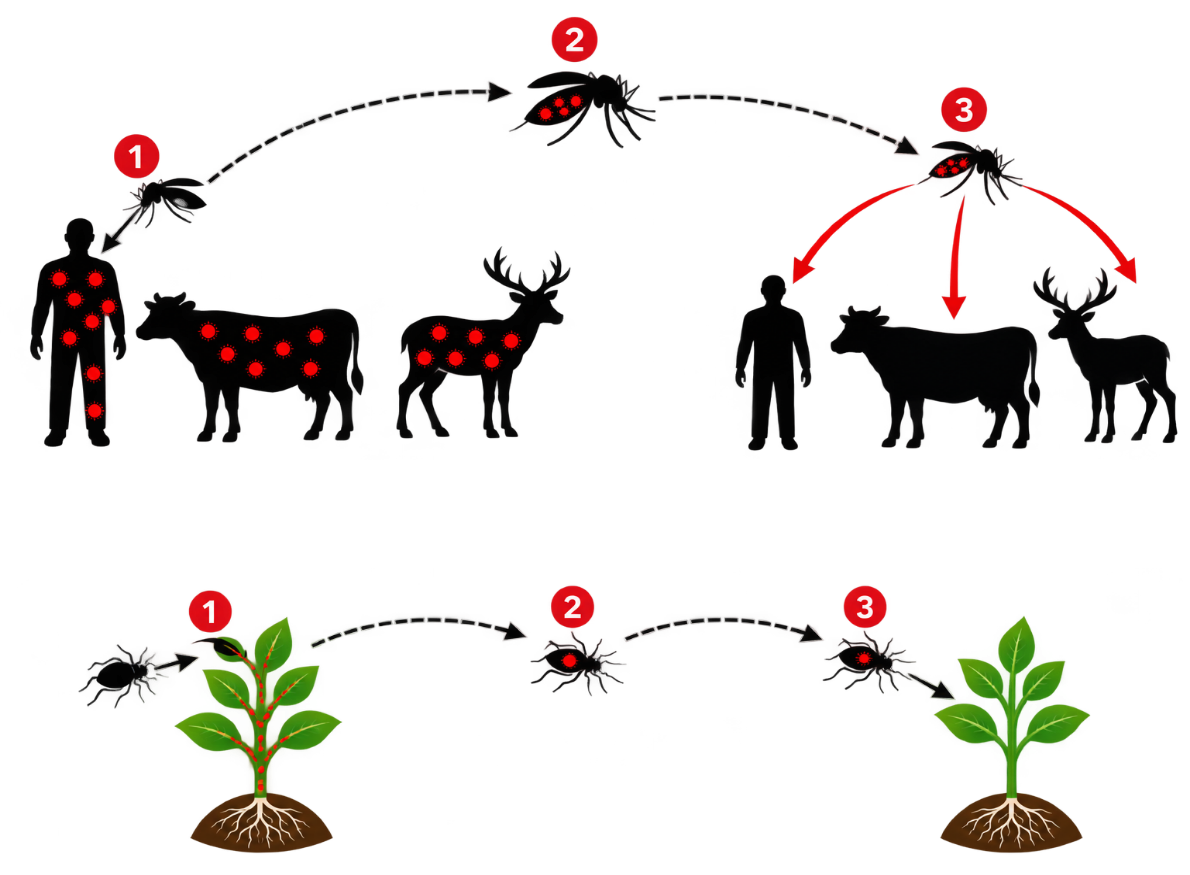}
        \vspace*{0.1cm}
        \caption{}
        \label{fig:HV_schema_icon}
    \end{subfigure}
    \begin{subfigure}{.55\textwidth}
        \centering
        \begin{tikzpicture}[
    scale=0.96, transform shape
    ]
        \node (VI) {$V_I(t)$};
        \node[left=2.75cm of VI] (VS) {$V_S(t)$};
            
        \draw[arrow, olive!90] ($(VS.east)+(0.15,0)$) -- ($(VI.west)+(-0.15,0)$) node[midway, below=0.02cm, olive!90] (arrowV) {\footnotesize $\lambda_{HV}(\mathbf{H_I(t)}) \, V_S(t)$};
            
        \node[below=0.7cm of arrowV] (HI) {$\mathbf{H}_I(t)$};
        \node[right=2.67cm of HI] (HS) {$H_S(t)$};
                
        \draw[arrow, olive!90] ($(HS.west)+(-0.15,-0.1)$) -- ($(HI.east)+(0.15,-0.1)$) node[midway, above=0.06cm, olive!90] (arrowH) {};
            
        \draw[arrow, olive!40, dashdotted] ($(HI.north)+(0,0.05)$) -- ($(arrowV.center)+(0,-0.35)$);
        \draw[arrow, olive!40, dashdotted] ($(VI.south)-(0,0.05)$) -- ($(arrowH.center)+(0,0.05)$);
                
        \draw ($(VI.north east)+(-0.07,-0.15)$) -- ($(VI.north east) + (-0.07,0.1)$) -- ($(VS.north west)+(0.05,0.1)$) -- ($(VS.north west)+(0.05,-0.15)$);

        \node[align=center, font=\large] at ($(VS.north west)!0.5!(VI.north east)+(0,0.6)$) {$V(t)$\\\scriptsize$V_S(t) + V_I(t) = V(t)$};
            
        \draw[arrowrev, purple!50, densely dashed] ($(VI.east)+(0.7,0.08)$) -- ($(VI.east)+(0,0.08)$) node[midway] (arrowOutVI) {};
            
        \draw[arrow, violet!80, densely dotted] ($(VI.east)+(
            0.7,-0.08)$) -- ($(VI.east)+(0,-0.08)$) node[midway] (arrowInVI) {};

        \coordinate (rVI) at ($(arrowOutVI)!0.5!(arrowInVI) + (1.2cm,0)$);
            
        \node[violet!1!purple] at (rVI) {\footnotesize $r(t) \, V_I(t)$};
            
        \draw[arrow, purple!50, densely dashed] ($(VS.west)-(0,-0.08)$) -- ($(VS.west)-(0.7,-0.08)$);
        
        \draw[arrow, violet!80, densely dotted] ($(VS.west)+(-0.7,-0.08)$) -- ($(VS.west)+(0,-0.08)$)
                node[midway, violet!1!purple, above=-0.1cm] (r) {};
        
        \node[left=1.2cm of VS] (Vlabel) {\footnotesize \textbf{Vectors}};
        \node (Hlabel) at ($(Vlabel|-HI)$) {\footnotesize \textbf{Hosts}};
    
        \node[darkgray, align=left] at ($(VS) + (2.2,-4.4)$) {
        \textbf{\footnotesize Legend}\\[2.5pt]
        
        \tikz{\draw[arrow, thick, olive!90] (0,0) -- (0.6,0);} \quad {\footnotesize incidence} \\[0.05pt]
    
        \tikz{\draw[arrow, thick, dashdotted, olive!40] (0,0) -- (0.6,0);} \quad {\footnotesize cross contribution to incidence} \\[0.05pt]
        
        \tikz{\draw[arrow, thick, purple!50, densely dashed, shading=axis,
            left color=blue!80!black,
            right color=blue!20,
            shading angle=0] (0,0) -- (0.6,0);} \quad {\footnotesize removal (e.g. death)}\\[0.05pt]
        
        \tikz{\draw[arrow, thick, densely dotted, violet!80] (0,0) -- (0.6,0);} \quad {\footnotesize recruitment (e.g. birth)}
    
        };
    \end{tikzpicture}
    \caption{}
    \label{fig:cycle_vect_transmission}
    \end{subfigure}
    \caption{\textit{\textbf{(a) Schematic representation of vector-borne pathogen transmission}: (1) a susceptible arthropod vector becomes infected following the bite of an infected host, (2) its viral load increases until (3) it reaches levels sufficient for onward transmission to a susceptible host.}\\\textit{(\textbf{b) Flow chart displaying the basic bricks of a compartmental host-vector model with prescribed vector abundance} $V(t)$}}
\end{figure}

\medskip

\subsection{Under what conditions can we guarantee inter-seasonal epidemic extinction?}\label{subsec:conditions_epidemic_extinction}
When vectors have strongly seasonal dynamics, deterministic compartmental models face an intrinsic limitation. During periods of very low or zero adult-vector abundance, infectious host and vector abundances may become arbitrarily small without reaching zero exactly. These residual deterministic numbers can subsequently seed renewed transmission when vectors return, producing artificial inter-seasonal re-emergence, a manifestation of the \textit{atto-fox problem} \cite{fowler_atto-foxes_2021,mollison_dependence_1991}.

In section \ref{subsec:shrinking-adj}, we introduce an adjustment that guaranties epidemic extinction. Indeed, we show that, in the absence of pathogen reintroduction, a sufficiently severe inter-seasonal decline in vector abundance eliminates the risk that infinitesimally small infection levels could trigger artificial outbreak in the subsequent season.
Moreover, we show that, under favourable conditions, this adjustment still allows epidemic growth from a biologically meaningful introduction.

\medskip
\medskip

Overall, these requirements are not specific to a particular vector species or pathogen. They come into play when the vector abundance is generated externally and subsequently used as input to the transmission model.

The following section formalises this construction and gives conditions ensuring that the infectious vector compartment remains non-negative and bounded above by the prescribed vector abundance. We then show how this framework can be applied to \Aa-borne disease transmission using either surveillance-derived vector abundance or abundance predicted by an independent population dynamic model.

\section{A well-posed framework for infection dynamics within a prescribed vector abundance} \label{sec:well_posed_framework}

\subsection{General constrained host-vector system}
A general deterministic host-vector model without delay with an SI structure for the vector subsystem satisfying the minimal hypothesis defined in Section \ref{subsec:min_vect_subsyst} can be written as

\begin{equation}\label{eq:model_gal}
\left\{
\begin{aligned}
\mathbf{H}'(t) &= \mathbf{F_H} \left(t,\mathbf{H}(t), V_S(t), V_I(t)\right),\\
V_S'(t) &= V'(t)-V_I'(t), \\
V_I'(t) &= \lambda_{HV}  \left(\mathbf{H}_I(t)\right)\,\bigl(V(t)-V_I(t)\bigr)
+
r(t)\,V_I(t)
\end{aligned}
\right.
\end{equation}
where $\mathbf{H}(t) = (H_S(t), \mathbf{H}_I(t), \mathbf{H}_R(t))$ denotes the epidemiological state of the host population composed of susceptible, uninfected hosts $H_S(t) \in \mathbb{R}_+$, an infectious host sub-state $\mathbf{H}_I(t) \in \mathbb{R}_+^n$ gathering all infectious host compartments, and a remaining host sub-state $\mathbf{H}_R(t) \in \mathbb{R}_+^m$ collecting compartments of individuals that are neither susceptible nor infectious.

In this formulation, the host dynamics are kept as general as possible and are described by a host field $\mathbf{F_H}$.
Therefore, while keeping the host dynamics generic and adjustable, we constrain the vector subsystem so that the demographic evolution of $V_S$ and $V_I$ is consistent with the population conservation constraint \eqref{eq:Vpop_conservation}, and, the transfer from $V_S$ to $V_I$ reflects an epidemiological infection process.

We establish below conditions ensuring that system \eqref{eq:model_gal} is well-posed and that the vector infection dynamics remains compatible with the prescribed abundance $V(t)$, in particular that \eqref{eq:Vpop_conservation} and \eqref{eq:contrainte_positivite} hold for all $t\in[t_0,T]$.

\subsection{Existence and uniqueness} \label{par:well-pos}
We first state assumptions ensuring existence and uniqueness of the solution:
\begin{enumerate}[label=(A\arabic*)]
    \item $V\in C^1([t_0,T])$, $V(t)\ge 0$ for all $t\in[t_0,T]$;
    \label{item:V_C1_positive}
    \setcounter{enumi}{1}
    \item $\mathbf{F_H}$ is continuous in its first variable and locally Lipschitz continuous in its second, third and fourth variables;
    \label{item:F_H_Lz}
    
    \item $\lambda_{HV}$ is locally Lipschitz continuous;
    \label{item:lambda_HV_Lz}
    
    \item $r$ is continuous;
    \label{item:r_continuous}
\end{enumerate}
Under assumptions \cref{item:V_C1_positive,item:F_H_Lz,item:lambda_HV_Lz,item:r_continuous}, the Cauchy-Lipschitz theorem gives a unique local solution to system \eqref{eq:model_gal} for every initial condition
\begin{equation}
\label{eq:CI_model}
\left\{
\begin{aligned}
\mathbf H(t_0)
&=
\left(
H_{S,0},
\mathbf H_{I,0},
\mathbf H_{R,0}
\right)
\in
\mathbb R_+
\times
\mathbb R_+^n
\times
\mathbb R_+^m,
\\
0
&\leq
V_I(t_0)
\leq
V(t_0),
\\
V_S(t_0)
&=
V(t_0)-V_I(t_0).
\end{aligned}
\right.
\end{equation}
The solution is defined on a maximal interval $[t_0,\overline t)$, with
$\overline t\leq T$ if finite-time blow-up occurs. If, in addition, the
right-hand side satisfies an appropriate global Lipschitz or global growth condition on the
relevant domain, the solution extends throughout $[t_0,T]$
\cite{hartman_2002}.

In the applications below $\overline t = T$, and pathogen introduction is usually imposed through
the host population, with $\mathbf H_{I,0}\neq\mathbf0$ and $V_I(t_0)=0$.
The general formulation above also allows introductions involving infectious
vectors.

\subsection{Biological admissibility of the vector compartments}

Existence and uniqueness alone do not guarantee that the solution is biologically meaningful. Since $V(t)$ is prescribed externally, we must also ensure that the infectious-vector compartment remains within the prescribed vector abundance envelope:

\begin{equation}\label{eq:VI_constraint}
    0\leq V_I(t)\leq V(t)
\quad\text{for all }t\in[t_0,T].
\end{equation}

\medskip

Thus, in addition to assumptions \cref{item:V_C1_positive,item:F_H_Lz,item:lambda_HV_Lz,item:r_continuous}, requiring
\begin{enumerate}[label=(A\arabic*)]
\setcounter{enumi}{4}
    \item $\lambda_{HV}(\mathbf H_I)\geq0$ for every biologically admissible infectious-host state $\mathbf H_I$; and
    \label{item:lambda_HV_positive}
    
    \item $V'(t)-r(t)V(t)\geq 0$ for all $t\in[t_0,T]$,
    \label{item:V_r_compatible}
\end{enumerate}
is sufficient for \eqref{eq:VI_constraint} to hold as stated in Appendix \ref{apx:critera_A4_A5}.

Assumption ~\ref{item:lambda_HV_positive} states that host-to-vector transmission contributes positively to the production of infectious vectors. 

Assumption ~\ref{item:V_r_compatible} is the compatibility condition between the prescribed vector abundance $V(t)$ and the demographic term $r(t)$. 
In the absence of vertical transmission, infectious vectors are removed through mortality and $r(t)=-\mu(t)$, with $\mu(t)>0$. Assumption~\ref{item:V_r_compatible} then becomes
$$V'(t)+\mu(t)V(t)\geq 0.$$This condition means that the prescribed total vector abundance cannot decline faster than what is compatible with the assumed mortality process. If this compatibility condition is not satisfied by a prescribed trajectory,
we regularise the trajectory to enforce it using the procedure described in
Appendix~\ref{apx:data_reg}.

\medskip

We now discuss the definition of the pathogen's basic reproduction number of this general model.

\subsection{A local reproduction index at the time of introduction}

The model introduced above is non-autonomous because the prescribed vector
abundance $V(t)$ and, more generally, the demographic and environmental
quantities entering the transmission model may vary with time. Consequently,
the classical basic reproduction number \cite{diekmann_definition_1990} of an autonomous system is not
directly defined for the full model over the entire time interval. 

Nevertheless, it is useful to characterise the epidemiological conditions
prevailing at the time of pathogen introduction $t_0$. To this end, we
consider an auxiliary autonomous system obtained by freezing the
time-dependent quantities at their values at $t_0$: $V(t)=V(t_0)$, $r(t) = r(t_0)$, and $F_H(t,\cdot) = F_H(t_0,\cdot)$. We refer to the
reproduction number of this frozen system as the \emph{local reproduction
index} and denote it by
\begin{equation}
\mathcal R_l(t_0).
\end{equation}
This quantity characterises the local potential for epidemic growth in the
epidemiological environment encountered at the time of introduction. It
does not account for subsequent changes in vector abundance or other
time-dependent quantities.

Using the next-generation matrix method, the pathogen's basic reproduction number is obtained as the spectral radius of the next-generation matrix \cite{van_den_driessche_reproduction_2002, brauer_mathematical_2008}. 
In the particular case where transmission is strictly vector-mediated and alternates between hosts and vectors, with no within-host, no within-vector nor vertical transmission, this matrix has an off-diagonal host-vector structure. In that case, the basic reproduction number classically writes as the geometric mean of the numbers of secondary infections produced across the two host-vector directions \cite{wonham_epidemiological_2004, moore_predators_2010}:
\begin{equation} \label{eq:Ro_gnrl}
    \mathcal{R}_l(t_0) = \sqrt{\mathcal{R}_{l, VH}(t_0) \, \mathcal{R}_{l, HV}(t_0)}
\end{equation}

\noindent Here, $\mathcal{R}_{l,VH}(t_0)$ denotes the number of infected hosts produced by one infectious vector in the frozen epidemiological environment at time $t_0$, whereas $\mathcal{R}_{l,HV}(t_0)$ denotes the number of infectious vectors produced by one infectious host. The square root arises because, in a strictly alternating vector-mediated transmission cycle, two transmission generations are required for an infectious host to produce new infectious hosts; the same reasoning applies to infectious vectors.
If additional transmission routes are present, such as vertical transmission among vectors, the appropriate quantity is the spectral radius of the full next-generation matrix. Such more general situations are discussed in Appendix~\ref{apx:r(t)}.
Moreover, since explicit expressions are model-specific, the formula is retained in a generic form. An application is presented in Section \ref{sec:Aa-human-model}.


\subsection{Extinction and emergence of the pathogen near the time of introduction}

For the frozen autonomous system used to define
$\mathcal R_l(t_0)$, the usual next-generation-matrix threshold has a local
stability interpretation 
\cite{van_den_driessche_reproduction_2002}. In particular,
$\mathcal R_l(t_0)<1$ implies local asymptotic stability of the disease-free
state in the infected directions, whereas $\mathcal R_l(t_0)>1$ implies loss
of this local stability. The case $\mathcal R_l(t_0)=1$ is not determined by
this linear argument.

This interpretation is local: the next-generation matrix is built from the
linearisation at the disease-free state and therefore describes the behaviour
of sufficiently small introductions. For nonlinear transmission terms, the
first-order contribution is captured by their derivative at the disease-free
state. If this derivative vanishes, higher-order nonlinear effects may still
allow epidemic growth following the introduction of a sufficiently large
number of infectious individuals, even though the disease-free state is
locally stable.

Moreover, for the original non-autonomous system,
$\mathcal R_l(t_0)$ characterises only the epidemiological environment frozen
at the introduction time. It does not account for subsequent variations in
$V(t)$, $r(t)$, or other time-dependent coefficients. It should therefore be
interpreted as a local reference index, rather than as a necessary and
sufficient criterion for pathogen persistence or epidemic size over
$[t_0,T]$.

\medskip
We now propose a framework that ensures realistic inter-seasonal propagation of a pathogen.

\subsection{Seasonal fade-out and realistic inter-seasonal propagation}\label{subsec:shrinking-adj}

To eliminate the \textit{atto-fox} artefact introduced in Section~\ref{subsec:conditions_epidemic_extinction}, one can adopt a discrete or stochastic formulation \cite{breban_role_2009,thompson_will_nodate,fowler_atto-foxes_2021}.
Here we retain a deterministic framework and replace the standard linear host-to-vector infection pressure by a low-density attenuation, referred to as the \emph{shrinking adjustment}.

For instance, the shrinking adjusted infection pressure associated with the density-dependent transmission, $\beta_{hv} H_I$, can be
\begin{equation}
\label{eq:lambdahv_atto}
\lambda_{HV}(H_I)
=
\beta_{hv}g(H_I),
\qquad
g(H_I)
=
\begin{cases}
0,
& \text{if } H_I\leq0,
\\[4pt]
\dfrac{H_I^2}{H_\star},
& \text{if } 0<H_I<H_\star,
\\[6pt]
H_I,
& \text{if } H_I\geq H_\star.
\end{cases}
\end{equation}
where $H_\star$ corresponds to the infected host density threshold under which shrinking adjustment is applied.

The factor $H_\star^{-1}$ preserves the physical dimension of $H_I$, while
\begin{equation}
\frac{\lambda_{HV}(H_I)}{H_I}
\longrightarrow0
\qquad
\text{as } H_I\to0^+,
\end{equation}
so host-to-vector transmission is attenuated more strongly than under the
standard linear formulation at very low infectious-host density.

Because $g'(0)=0$, the disease-free set of the frozen shrinking-adjusted
system is locally attracting in the infected directions, independently of
the value of the reproduction index of the corresponding linear model.
Importantly, this local stability does not preclude epidemic growth when the
initial infection level is sufficiently far from zero. Appendix~\ref{apx:emgc_ext_under_shrinking}
shows that, when one infectious host is introduced, i.e. when
\begin{equation}
H_I(t_0)=H_\star,
\end{equation}
there exist explicit epidemiological conditions under which the infectious
host compartment necessarily undergoes positive growth.
Thus, the shrinking adjustment suppresses transmission generated by
arbitrarily small residual densities without preventing epidemic growth from
a biologically meaningful finite introduction when conditions are
sufficiently favourable.

More precisely, Appendix~\ref{apx:emgc_ext_under_shrinking} considers the
epidemiological environment frozen at $t_0$. It shows that
$\mathcal R_l(t_0)<1$, where $\mathcal R_l$ is given by
equation~\eqref{eq:Ro_gnrl} and denotes the reproduction index of the
corresponding \textit{linear} frozen model, remains a sufficient condition for
extinction under the shrinking adjustment. Conversely, the sufficient condition derived for positive growth of the
infectious host compartment following the introduction of one infectious host
necessarily implies $\mathcal R_l(t_0)>1$, but $\mathcal R_l(t_0)>1$ alone
is not sufficient to guarantee such growth. Hence, $\mathcal R_l$ is used
only as a reference index and is not a local threshold for the
shrinking-adjusted system.

Appendix~\ref{apx:proof_winter_extinction} then addresses the full
non-autonomous system and formalises the inter-seasonal mechanism. For every
sufficiently long low-vector period, it provides a sufficiently small upper
bound on $V(t)$ such that, if vector abundance remains below this bound
throughout the period, the infectious host and vector compartments enter a
shrinking-dominated invariant region. Once this region has been reached,
 both $H_I(t)$ and $V_I(t)$ subsequently converge
exponentially to zero. Crucially, this decay continues even if vector
abundance later increases again. Therefore, in the absence of pathogen reintroduction, a
sufficiently severe inter-seasonal reduction in vector abundance prevents
arbitrarily small deterministic infection levels left from one vector season
from initiating an artificial outbreak in the next.

\section{Applying the framework to \Aa-human arbovirus model with prescribed vector abundance}\label{sec:Aa-human-model}
We now specify a concrete host-vector formulation designed to describe arboviruses transmission by \Aa\ to humans over the spatial domain $\Omega$ under consideration (typically, a city or a region).

\subsection{\Aa-human arbovirus model}
On the host side, we consider a classical SIR structure without demographic turnover on the epidemic time scale and with a finite total population size. The host state is written
$\mathbf{H(t)} = (H_S(t), H_I(t), H_R(t)) \in \mathbb{R}_+^3,$
where $H_S$, $H_I$ and $H_R$ denote the density (i.e. the number of individuals per unit area of the spatial region under consideration) of susceptible, infectious and recovered hosts, respectively. Transmission from vectors to hosts is assumed density-dependent, with force of infection
$\lambda_{VH}(V_I(t)) = \beta_{vh} V_I(t)$,
where $\beta_{vh}>0$ is the transmission coefficient from infectious vectors to susceptible hosts.

\medskip 
On the vector side, as a first step, we consider the infection pressure to be a standard linear form:
\begin{equation} \label{eq:lambdahv}
\lambda_{HV}  \left(H_I\right)
=
\beta_{hv} H_I,
\end{equation}
allowing the computation of the frozen-environment local reproduction index  and the derivation of conditions for invasion. In \eqref{eq:lambdahv}, $\beta_{hv}>0$ denotes the baseline transmission coefficient from infectious hosts to susceptible vectors.
Moreover, we assume no vertical transmission and that infected vectors remain infectious throughout their lifespan. The net demographic term therefore reduces to
\[
r(t) = -\mu(t),
\]
where $\mu(t)>0$ represents the death rate of infectious vectors.

\medskip

\medskip

The dynamic then writes
$$  \left\{
    \begin{aligned}
    H_S'(t) &= - \beta_{vh} V_I(t) H_S(t) \\
    H_I'(t) &= \beta_{vh} V_I(t) H_S(t) - \gamma H_I(t) \\
    H_R'(t) &= \gamma H_I(t) \\
    V_S'(t) &= V'(t)-V_I'(t) \\
V_I'(t) &= \beta_{hv} H_I(t)\,\bigl(V(t)-V_I(t)\bigr)
-\mu(t)\,V_I(t)
    \end{aligned}\right.,
$$

\noindent where $\gamma>0$ is the recovery rate ($1/\gamma$ is the mean duration of the host infectious stage).

Because $H_S$, $H_I$, and $H_R$ are densities, host outcomes reported below
as numbers of individuals are obtained by multiplying by the spatial area.
We therefore write
\begin{equation}
N_X(t):=|\Omega|H_X(t),
\qquad
X\in\{S,I,R\}.
\label{eq:host_density_to_number}
\end{equation}
When $H_R(t_0)=0$, $N_R(T)=|\Omega|H_R(T)$ is the cumulative number of hosts
that have passed through the infectious compartment by time $T$.

\paragraph{Model well-posedness.}
This host-vector formulation satisfies assumptions \cref{item:F_H_Lz,item:lambda_HV_Lz,item:lambda_HV_positive,item:r_continuous} as verified in Appendix \ref{apx:well-pos-example}.
 In the absence of
vertical transmission, $r(t)=-\mu(t)$, so
assumption~\ref{item:V_r_compatible} becomes
\begin{equation}
V'(t)+\mu(t)V(t)\geq0.
\end{equation}
The prescribed abundance must also satisfy
assumption~\ref{item:V_C1_positive}. When an empirical or simulated
trajectory does not satisfy these requirements directly, we apply the
Gaussian regularisation described in
Appendix~\ref{apx:data_reg}, selecting a smoothing scale that enforces the
compatibility inequality.

\paragraph{Local reproduction index.}
Appendix~\ref{apx:r(t)} gives the next-generation matrix of
the epidemiological environment frozen at $t_0$. With
\begin{equation}
V_0:=V(t_0),
\qquad
\mu_0:=\mu(t_0),
\qquad
H_{S,0}:=H_S(t_0),
\end{equation}
the resulting local reproduction index is
\begin{equation}
\mathcal R_l(t_0)
=
\sqrt{
H_{S,0}V_0
\frac{
\beta_{hv}\beta_{vh}
}{
\gamma\mu_0
}
}.
\label{eq:Rl_application}
\end{equation}
Equivalently,
\begin{equation}
\mathcal R_l(t_0)
=
\sqrt{
\mathcal R_{l,VH}(t_0)
\mathcal R_{l,HV}(t_0)
} \hbox{ with }
\mathcal R_{l,VH}(t_0)
=
\frac{
\beta_{vh}H_{S,0}
}{
\mu_0
},
\qquad
\mathcal R_{l,HV}(t_0)
=
\frac{
\beta_{hv}V_0
}{
\gamma
}.
\end{equation}
Thus $\mathcal R_l(t_0)$ increases with the susceptible-host density,
the prescribed vector abundance at introduction, and the two transmission
coefficients, and decreases with host recovery and vector mortality.

\subsection{Data and parameters}
We hereafter apply this conditional approach with two different prescribed total vector populations $V(t)$ as inputs: one obtained from entomological surveillance of \Aa, the other from an external vector model.

\paragraph{Entomological data from surveillance of \Aa.}\Aa\ mosquitoes are part of the Culicidae family. After mating, female mosquitoes search for blood meals needed for egg growth and egg-laying. The eggs are then laid on wet places and develop for 2 to 7 days in the aquatic phase \cite{abd_life_2020, kweka_ecology_2018}. Depending on environmental conditions, these eggs might survive, hatch, and undergo the three remaining life stages: larva, pupa, and adult. The adults then disperse and reproduce in turn. If European winter temperature conditions do not allow adults to survive \cite{delatte_influence_2009}, \Aa\ produces diapausing winter eggs that hatch during the next spring to maintain its presence \cite{roques_dynamics_2023}. The difficulty of accurately modelling the mosquito life cycle justifies this approach, as it permits demographic processes and epidemic dynamics to be modelled separately.

The observed \Aa\ egg counts were obtained from entomological surveillance programs deployed in three different cities. In Decize and Dijon, France, eggs were collected using ovitraps deployed in the urban areas of both cities (two ovitraps in each city) between 2015 and 2025. In Nice, France, \Aa\ eggs were monitored with a network of 47 ovitraps for the year 2025. In both cases, collected eggs were subsequently identified by trained entomologists.

Each ovitrap consisted of a three litre black plastic bucket filled with a plant maceration infusion and covered with a wire mesh. A polystyrene oviposition support was placed inside the trap to provide a suitable substrate for egg laying. Ovitraps were installed at ground level under vegetation cover (e.g., bushes or shrubs).
The traps were deployed, maintained, and collected by a public surveillance operator. Following collection, eggs were identified under a binocular microscope by an entomologist.

The pre-processing steps applied on these observed data consists in a piecewise interpolation (adding zeros outside of the seasonal activity period of \Aa\ where no trap was set), followed by the regularisation proposed in Appendix \ref{apx:data_reg} that ensures well-posedness criteria are satisfied.
As a first approximation, we assume that the number of eggs is proportional to the number of females involved in vector-mediated transmission (i.e. the number of \textit{potentially infectious females})
$$
\underbrace{n_f(t)}_{\shortstack{\footnotesize total number\\\footnotesize of potentially\\\footnotesize infectious females}}
=
\underbrace{n_e(t)}_{\shortstack{\footnotesize total\\\footnotesize number \\\footnotesize of eggs}}
\,.
\underbrace{(1-\mu_l(t))}_{\shortstack{\footnotesize larval\\\footnotesize survival}}
\,.
\underbrace{\frac{1}{2}}_{\shortstack{\footnotesize sex\\\footnotesize ratio}}.
$$
$\mu_l$ denotes the larval mortality and is presented in Appendix \ref{apx:larval_survival}.
The resulting adult-female abundance proxy, rescaled by the city area, is used as the prescribed vector-abundance input (i.e. $V(t) = n_f(t)/|\Omega|$).

\paragraph{Entomological data from an external vector model.}
Simulated data arise from DISTIGRI, a spatio-temporal model of the current and future expansions at intra- and inter-annual scales of \Aa\ \cite{roques_dynamics_2023}.
It models adult \Aa\ life cycle and dispersal over the French territory, with faster dispersal along the main roads, and taking into account both temperatures and photoperiod data at a high spatial and temporal resolution.
In this study, DISTIGRI is used to estimate the mean density of adult \Aa\ over one year (2024) in each mainland French department (corresponding to 94 administrative territories of median surface area of 5,964 km$^2$).
The initial conditions are defined based on the observed presence of \Aa\ in March 2022 summarized in ECDC distribution map \cite{ECDC_Aedes_map}.
The temperature data at the hourly  time step on a $8 \times 8 \mbox{ km}^2$ grid covering France were provided by the SAFRAN database over the period ranging from August 1$^{st}$, 2021 to July 31$^{st}$, 2025 \cite{durand_reanalysis_2009}.\newline
For the epidemiological application, the DISTIGRI adult-density output is
treated as the density of females potentially participating in transmission. $V(t)$
should be interpreted as a relative vector-abundance index; comparisons
between territories are then meaningful on this common scale, whereas the
numerical values of the fitted transmission coefficients remain conditional
on DISTIGRI normalisation.

\paragraph{Death rate of the infectious vector class.} In our setting, $\mu(t)$ is temperature dependent. As presented in Appendix \ref{apx:T}, $\mu$ is calibrated from published experimental data on adult female \Aa\ mortality across a range of temperatures.
When using entomological surveillance data as input, temperature data at the daily time step at the French department scale were provided by an open government dataset \cite{T_daily_opendata}.
When using DISTIGRI data as input, temperature data at the hourly time step on a 8 $\times$ 8 km$^2$ grid covering France were provided by the SAFRAN database \cite{durand_reanalysis_2009}.
In either case, the temperature series was smoothed with a Gaussian kernel
before evaluating $\mu(t)$, yielding a continuous mortality-rate trajectory
as required by assumption~\ref{item:r_continuous}.

\paragraph{Epidemiological data and parameters.}
The human infectious period for chikungunya virus is assumed to be approximately 5 days ($\gamma = 1/5$ days$^{-1}$) \cite{ecdc_factsheet_2026, SpF_fiche_chik}.
The date of introduction and the cumulative number of infections in the human population over the epidemic period were obtained from Santé publique France epidemiosurveillance program \cite{SpF_bulletin_2025}.
The transmission parameters ($\beta_{hv}$, $\beta_{vh}$) were estimated from numerical simulations (see Section \ref{subsec:city}).

\paragraph{Population density and superficie area}of French cities and administrative territories arise from Insee data \cite{insee_recensement_2023}.

\paragraph{Shrinking adjustment.}
When shrinking adjustment is introduced, we move from the standard linear form of the infection pressure (see equation \ref{eq:lambdahv}) to the shrinking adjusted infection pressure (see equation \ref{eq:lambdahv_atto}). Moreover, we set
\begin{equation}
H_\star=\frac{1}{|\Omega|},
\end{equation}
so that $H_I=H_\star$ corresponds to one infectious host in the spatial
domain $\Omega$, and the adjustment acts below this scale.

\paragraph{Numerical simulations.} We used an implicit Crank-Nicolson time-stepping scheme. The nonlinear terms were treated with a Newton-Raphson algorithm. Time integration was performed from $t_0$ to $T$ with a prescribed time step $\Delta t$ (typically of order $10^{-2}$). Time-dependent model inputs ($V(t)$, $V'(t)$ and $r(t)$) were evaluated on the simulation time grid before integration and subsequently used at the corresponding time steps.


\begin{figure*}[hb!]
        \subfloat[]{%
            \includegraphics[width=.45\linewidth]{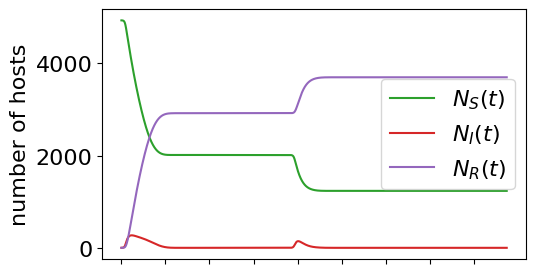}%
            \label{fig:output_HVModel_all}%
        }\hfill
        \subfloat[]{%
            \includegraphics[width=.45\linewidth]{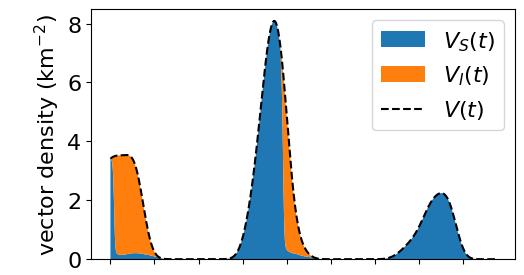}%
            \label{fig:output_HVModel_V}%
        }\\
        \subfloat[]{%
            \includegraphics[width=.45\linewidth, clip, trim={0 5 0 0}]{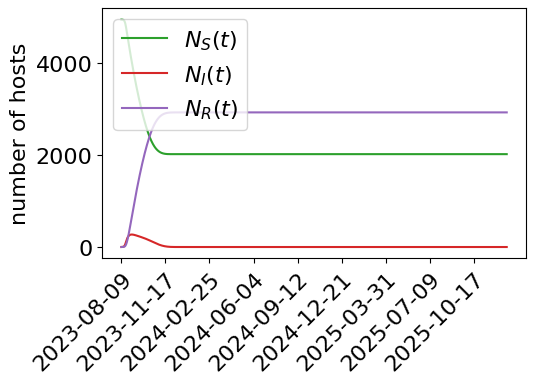}%
            \label{fig:output_sr}%
        }\hfill
        \subfloat[]{%
            \includegraphics[width=.45\linewidth, clip, trim={0 5 0 0}]{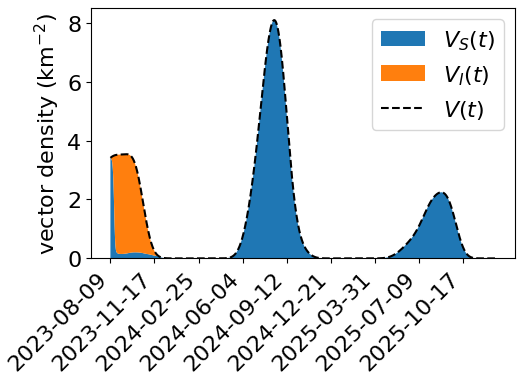}%
            \label{fig:output_sr_V}%
        }
        \caption{\small\textit{\textbf{Outputs of the \Aa-human arbovirus model} (a)-(b) with the standard linear form infection pressure, or (c)-(d) with shrinking adjustment.\\
        (a),(c): evolution of the number of susceptible, infectious and recovered hosts ($N_S, N_I, N_R$) after the introduction of the pathogen at $t_0 =$ August $9^{th}$, 2023.\\
        (b),(d): distribution of the susceptible vectors $V_S(t)$ and the infectious vectors $V_I(t)$ within the prescribed vector abundance $V(t)$.\\
        Specified vector abundance: entomological surveillance data of eggs of \Aa\, collected in Decize, France, between 2023 and 2025. $\beta_{vh}=0.004$, $\beta_{hv}=0.4$, $\gamma = 1/5$ days$^{-1}$.\\}}
        \label{fig:output_HVModel}
    \end{figure*}

\newpage
\subsection{Results from prescribing entomological surveillance data}

\paragraph{Reconstructing inter-seasonal infection dynamics within the prescribed vector abundance.}
The host-vector model describes the internal infection dynamics within both host and vector populations. Figure \ref{fig:output_HVModel} displays this dynamic after the introduction of one infected host, in a favourable emergence context ($\mathcal{R}_l(t_0) >1$), and driven by entomological surveillance data of \Aa\, in Decize, France, between 2023 and 2025.
Figures \ref{fig:output_HVModel_all} and \ref{fig:output_HVModel_V} illustrate the model output obtained with the standard linear form of the infection pressure (see equation~\eqref{eq:lambdahv}).
For the vectors, Figure \ref{fig:output_HVModel_V} illustrates how infected individuals contribute to the specified vector abundance dynamic.
Note that, despite the absence of vectors during the winter period and the number of infectious hosts dropping to the order of $10^{-15}$, the infected individuals re-emerge. This biologically inconsistent observation underscores the challenges associated with modelling realistic inter-seasonal propagation of a pathogen. It justifies the introduction of the shrinking adjustment defined in Section \ref{subsec:shrinking-adj}.

The shrinking adjustment weakens transmission at low host prevalence, thus correcting any unrealistic inter-seasonal persistence of the epidemic. This result is established theoretically in Appendix \ref{apx:proof_winter_extinction} and illustrated numerically in Figures \ref{fig:output_sr} and \ref{fig:output_sr_V}.

\paragraph{Dependence of epidemic scale on entomological and epidemiological conditions.}

\begin{figure}[htb!]
\begin{subfigure}{.58\textwidth}
    \centering
    \includegraphics[width=.95\linewidth]{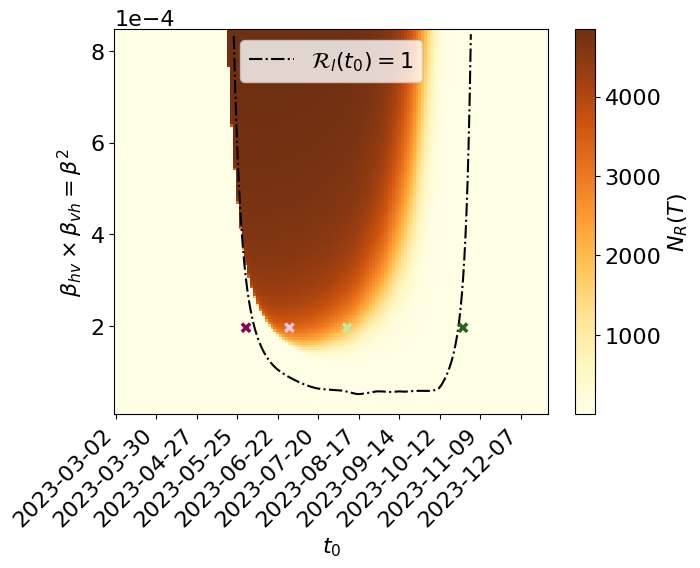}
    \caption{}
    \label{fig:heatmap}
\end{subfigure}\hfill
\begin{subfigure}{.41\textwidth}
    \centering   
    \vspace*{-2cm} \includegraphics[width=.99\linewidth]{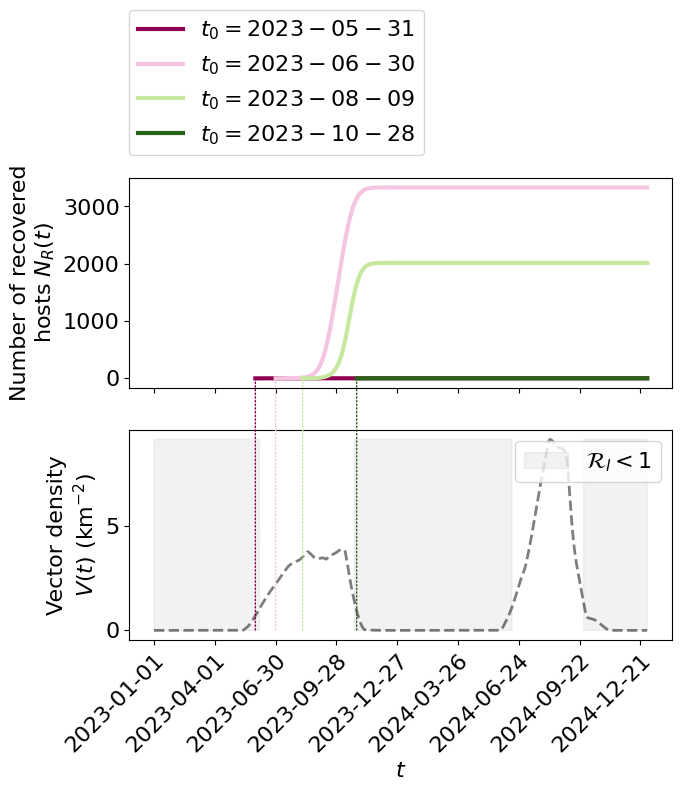}
    \vspace*{0.2cm}
    \caption{}
    \label{fig:t0_dependence}
\end{subfigure}
\caption{\small\textit{\textbf{(a) Total number of infected over the epidemic period in the host population $N_{R}(T)$} in terms of the transmission rates ($\beta_{hv}$ and $\beta_{vh}$) and of the date of introduction ($t_0$) with shrinking adjustment and with specified vector abundance.
The black dashed line corresponds to set of parameters where $\mathcal{R}_l = 1$. When $\mathcal{R}_l < 1$, conditions are not favourable for emergence and the epidemic fades out.
$\gamma = 1/5$ days$^{-1}$, $\beta_{vh}=\beta_{hv}$. \\\textbf{(b) Evolution of the cumulated number of infected hosts} $N_R(t)$ for four dates of introduction at fixed transmission rates ($\beta_{hv} = \beta_{vh} = 1.32 \times 10^{-2}$). The corresponding time-varying vector abundance $V(t)$ is presented below.\\Specified vector abundance: entomological surveillance data of eggs of \Aa\ collected in Decize, France, in 2023 and 2024.}}
\label{fig}
\end{figure}

The epidemic outcome may vary depending on the entomological and epidemiological conditions at the time of introduction (e.g. depending on the number of susceptible vectors, the level of the infection pressure).
Numerical results show, under shrinking adjustment and in a favourable emergence context, that the total number of infected hosts increases with the transmission rates (see Figure \ref{fig:heatmap}).
Moreover, later introduction results in smaller epidemic scale (see Figures \ref{fig:heatmap} and \ref{fig:t0_dependence}).

These numerical results also highlight $\mathcal{R}_l(t_0)$ is an imperfect indicator of emergence. Indeed, as shown in Figure~\ref{fig:heatmap}, when the transmission parameter is large enough and for early pathogen introduction, an epidemic can grow even while $\mathcal{R}_l(t_0) < 1$. This occurs because $\mathcal{R}_l(t_0)$ is defined at a single point in time, without accounting for future variations in $V(t)$.

\paragraph{City-specific transmission context.} \label{subsec:city}
\begin{figure}[htb!]
    \begin{subfigure}{.49\textwidth}
        \centering
        \includegraphics[width=0.99\linewidth]{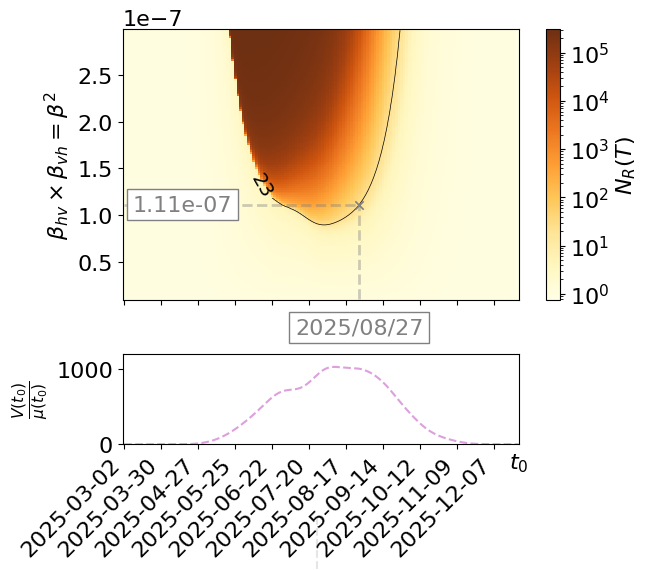}
        \caption{}
        \label{fig:heatmap_Nice_2025}
    \end{subfigure}\hfill
    \begin{subfigure}{.49\textwidth}
        \centering
        \includegraphics[width=0.99\linewidth]{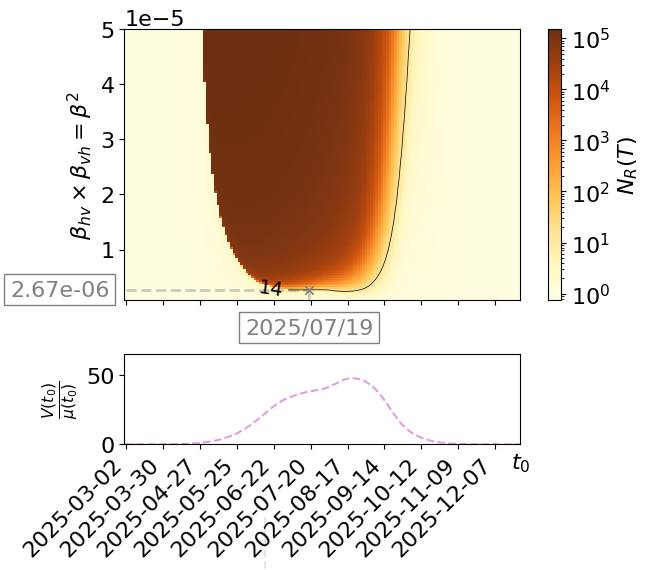}
        \caption{}
        \label{fig:heatmap_Dijon_2025}
    \end{subfigure}
    \caption{\small\textit{\textbf{Total number of infected over the epidemic period in the host population} $N_{R}(T)$ simulated in terms of the transmission rates ($\beta_{hv}$ and $\beta_{vh}$) and of the date of introduction ($t_0$), with shrinking adjustment, and \textbf{including observed indigenous cases of chikungunya virus disease}. The corresponding evolution of the $V/\mu$ ratio is displayed below. $\gamma = 1/5$ days$^{-1}$, $\beta_{vh}=\beta_{hv}=: \beta$.
    \\\textbf{(a) Nice, France}: based on entomological surveillance data of eggs of \Aa\ in Nice, France, in 2025.  The continuous black line corresponds to the isocline $N_R(T) = 23$ (total number of reported cases during the Nice 2025 outbreak).
    \\\textbf{(b) Dijon, France}: based on entomological surveillance data of eggs of \Aa\ in Dijon, France, in 2025.
    The continuous black line corresponds to the isocline $N_R(T) = 14$ (total number of reported cases during the Dijon 2025 outbreak).}}
    \label{fig:heatmaps_2025}
\end{figure}

As the date of introduction and the number of indigenous cases are frequently examined by public health stakeholders, this heatmap also provides a means to estimate the transmission rate.
For instance, 23 cases of chikungunya virus disease were reported in Nice during the 2025 outbreak in mainland France. The clinical signs of the first case were reported on August 27$^{th}$, 2025 \cite{SpF_bulletin_2025}. Figure \ref{fig:heatmap_Nice_2025} then allows to graphically estimate the transmission rate corresponding to this outbreak ($\beta_{hv} \times \beta_{vh} = 1.11 \times 10^{-7}$).
Similarly, Figure \ref{fig:heatmap_Dijon_2025} allows to graphically estimate the transmission rate corresponding to the Dijon outbreak in 2025 ($\beta_{hv} \times \beta_{vh} = 2.67 \times 10^{-6}$).
This method therefore enables characterization of the epidemiological context specific to each city and facilitates comparisons between locations (e.g. differences in personal protection measures).

\subsection{Results from prescribing the output of a complex population dynamics model}

\begin{figure}[htb!]
    \centering
    \includegraphics[width=0.7\linewidth]{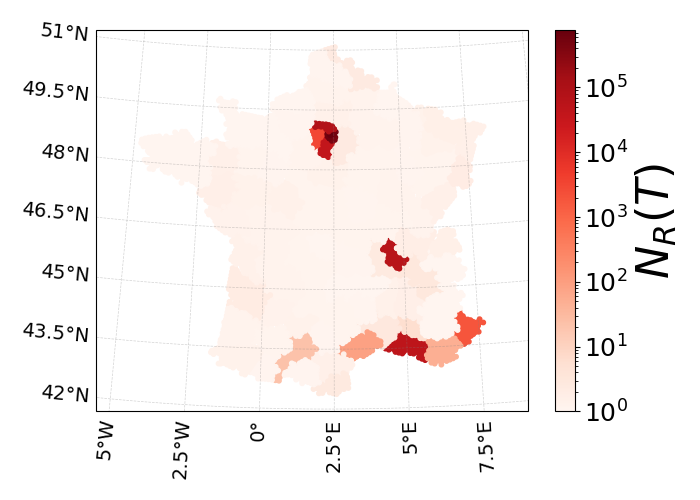}
    \caption{\small\textit{\textbf{Total number of infected hosts over the epidemic period in each French continental department}, with shrinking adjustment, based on DISTIGRI output, under the hypothesis of introduction of one infected host per department at $t_0$ : July 24$^{th}$, 2024. \\$T:$ November 2$^{nd}$, 2024, $\gamma = 1/5$, $\beta_{hv}=0.02$,
    $\beta_{vh}=0.008$}}.
    \label{fig:map}
\end{figure}

\paragraph{Department-level vulnerability to arbovirus transmission.}

By predicting \Aa\ distribution at an $8 \times 8$ km$^2$ resolution across France, the DISTIGRI model enables scaling up to broader geographic levels and facilitates comparisons of chikungunya vulnerability among larger administrative regions.
Here, DISTIGRI outputs are aggregated at the continental French administrative territory level and serve as inputs to our \Aa -human arbovirus model. Figure \ref{fig:map} illustrates the differences in epidemic outcome across departments. In addition, the \href{https://phoreas.biosp.inrae.fr}{PHOREAS web-app}\footnote{The PHOREAS web-app is available here: \url{https://phoreas.biosp.inrae.fr}} provides an interactive interface to visualise these results.
Both densities of susceptible hosts and susceptible vectors are key drivers of outbreak magnitude (see Supplementary Figure \ref{supfig:maps}). The larger the susceptible reservoir within a given territory, the greater its vulnerability to the emergence and spread of a vector-borne disease outbreak.
As spatial structure is not explicitly modelled here, this approach provides the mean epidemic behaviour and outcome within each territory, treated as independent subsystems. Future work will focus on incorporating spatial dynamics.


\newpage
\newpage
\section{Discussion}

We provide theoretical guarantees for the proper embedding of a specified vector population dynamic within a general vector-mediated transmission model.
This conditional modelling strategy extends existing approaches by enabling the development of mechanistic models based on a known vector abundance that is empirically measured or carefully modelled independently.
This framework is demonstrated through an application to an \Aa -borne disease.
Howerver, its adaptability to other vector-borne pathosystems expands its potential applications, whether it be in animal, human or plant health.

While the primary objective of this work is to provide the key elements for handling a prescribed vector abundance and to unveil the dynamic of infected vectors, the application model was chosen as simple as possible. It could be extended to include additional features, such as direct transmission between hosts \cite{cousins_modelling_2019, ewing_novel_2021} or additional infectious stages \cite{chowell_assessing_2019, gao_prevention_2016, suparit_mathematical_2018, biswas_mathematical_2019}.
Moreover, the formulation of the infection pressure could be refined by explicitly accounting for the dependence of vector competence on its age and on temperature, the duration of its gonotrophic cycle, environmental conditions, and pathogen identity \cite{mordecai_detecting_2017, luhken_high_2024, zouache_three-way_2014, yeh_anti-immune_2023, viglietta_vector_2021}. Such an extension would provide a more biologically realistic description of pathogen transmission.

Moreover, we derive associated extinction and emergence criteria, that is, conditions at the time of pathogen introduction under which it either dies out or invades the susceptible population.
The influence of entomological and epidemiological conditions at the time of introduction is also discussed.
Although only the dependence on the introduction date is illustrated, any other effect that reduces the reservoir of susceptible vectors likewise diminishes the epidemic extent. By directly affecting the vector abundance, global rise of temperatures or management interventions, including vector-control or personal protection control strategies, may substantially influence both pathogen emergence risk and epidemic scale \cite{tjaden_mosquito-borne_2018, ewing_novel_2021, caminade_suitability_2012, garrido_zornoza_effect_2024, suparit_mathematical_2018, tegar_surveillance_2026, demers_dynamic_2018}. Their integration into this modelling framework would provide opportunities for further quantitative analyses to inform public decision-making.

Over longer time scales, our model ensures the ability to capture effective propagation of the pathogen, including inter-annually. Indeed, we develop a methodology within a deterministic framework that captures effective inter-seasonal pathogen propagation. Our model therefore allows the pathogen to fade out from the population when vectors are absent throughout the winter, provided that this winter period is sufficiently long.
The proofs associated with this methodology are developed under the assumption that the initial infection originates from the host population. Analogous arguments apply under the alternative hypothesis that the pathogen is introduced via a vector.
For intelligibility reasons and given the model dependence on the quantified criteria, these proofs are presented within the specific \Aa -borne disease model derived in Section \ref{sec:Aa-human-model}. Note that the underlying arguments extend to other modelling frameworks.

Regarding the formulation of the net demographic term for infectious vectors, $r(t) = - \mu(t)$, on the one hand, it was specified as a function of the daily mean temperature. A more refined approach would incorporate temperature fluctuations, which have been shown to enhance mosquito survival \cite{jian_effects_2023}.
On the other hand, this formulation assumes that variability in vector dynamics is driven by recruitment into and removal from the susceptible vector class.
For increased accuracy, one could distinguish between birth and death rates within the vector population with several independent observations of a corresponding stochastic trajectory \cite{bansaye_stochastic_2017}. 

Finally, while they provide valuable qualitative insights, the numerical results should be interpreted with caution from a quantitative perspective. 
Regarding entomological surveillance data, although ovitraps are inexpensive, non-intrusive and can be operated efficiently with minimal training \cite{gubler_surveillance_2014, focks_review_2003}, eggs counting is non-exhaustive. Laboratory and field data are required to better estimate the number of eggs laid, and then infer the number of susceptible females actively contributing to the epidemiological dynamics.
Nonetheless, our work demonstrates that entomological surveillance strategies generate data that, despite their potentially low resolution, are highly valuable within this research context.

\newpage
\paragraph{Ethics.} This work did not require ethical approval from a human subject or animal welfare committee.

\paragraph{Data accessibility.} All Python code developed to run simulations and generate figures is publicly available at [link to come]. Vector-abundance data obtained from DISTIGRI, the external detailed population dynamic model, are available at the same link. In addition, the \href{https://phoreas.biosp.inrae.fr}{PHOREAS web-app} provides an interactive interface to visualise results obtained by prescribing vector abundance data from DISTIGRI.
Entomological surveillance data are available on demand to the French \textit{Agences Régionales de Santé} and the \textit{Ententes Interdépartementales pour la Démoustication}.

\paragraph{Authors' Contributions.} L.I., J.P., L.R. and S.S. conceived the model, discussed the results, and wrote the final manuscript. O.B. adapted the implementation of the DISTIGRI model to meet the specific requirements of this study. L.I. carried out the numerical simulations and generated the associated figures, and established agreements for sharing entomological data with French public health agencies (\textit{Agences Régionales de Santé}) and their operators (\textit{Ententes Interpartementales pour la Démoustication} and \textit{Altopictus}). 

\paragraph{Conflict of interest.} The authors declare no conflict of interest.

\paragraph{Funding.} This work is funded by the French Ministry of Agriculture. In addition, it received support from the Maths-ArboV project which is funded by the PEPR Maths-Vives managed by the National Research Agency (ANR), under the France 2030 program, reference ANR-23-EXMA-0001.

\paragraph{Ackowledgments.} The authors gratefully acknowledge Frédéric Simard for helpful discussions, together with the French \textit{Agences Régionales de Santé} and the \textit{Ententes Interdépartementales pour la Démoustication}, for sharing their datasets.


\newpage
\printbibliography

\clearpage

\begin{appendices}


\section{Demographic contribution to infectious vectors and local reproduction index}
\label{apx:r(t)}

\paragraph{Demographic contribution to the infectious vector class.}
The demographic contribution $r(t)V_I(t)$ to the infectious vector
compartment must be specified consistently with the prescribed total vector
abundance $V(t)$. This choice determines both the biological interpretation
of the model and the compatibility condition required to ensure
\begin{equation}
V_S(t)+V_I(t)=V(t),
\qquad
V_S(t)\geq0,
\qquad
V_I(t)\geq0.
\end{equation}
The corresponding admissibility conditions are derived in
Appendix~\ref{apx:critera_A4_A5}.

We consider the vector subsystem
\begin{equation}
\left\{
\begin{aligned}
V_S'(t)
&=
V'(t)-V_I'(t),
\\
V_I'(t)
&=
\lambda_{HV}(H_I(t))
\left(
V(t)-V_I(t)
\right)
+
r(t)V_I(t).
\end{aligned}
\right.
\label{eq:apx_general_vector_subsystem}
\end{equation}
The term $r(t)V_I(t)$ represents the net demographic contribution acting
specifically on infectious vectors.

Suppose first that the total vector population can be described through a
per capita birth rate $b(t)\geq0$ and a per capita death rate $\mu(t)>0$.
On any interval on which $V(t)>0$,
\begin{equation}
V'(t)
=
\left(
b(t)-\mu(t)
\right)V(t),
\end{equation}
or equivalently
\begin{equation}
\frac{V'(t)}{V(t)}
=
b(t)-\mu(t).
\label{eq:bmu}
\end{equation}
Thus, the prescribed abundance $V(t)$ determines only the net demographic
growth rate $b(t)-\mu(t)$ and does not determine $b(t)$ and $\mu(t)$
separately. At least one of these two rates must therefore be specified
independently if both are required by the infection model.

In the absence of vertical transmission, infectious vectors do not generate
infectious offspring. Their demographic contribution is then restricted to
mortality, and we set
\begin{equation}
r(t)=-\mu(t).
\label{eq:apx_r_no_vertical}
\end{equation}
This is the formulation used in the main text.

If vertical transmission is included, let
$\alpha\in[0,1]$ denote the fraction of offsprings produced by infectious
vectors that enter the infectious vector class. The corresponding demographic
contribution becomes
\begin{equation}
r(t)
=
\alpha b(t)-\mu(t),
\label{eq:apx_r_vertical}
\end{equation}
and the infectious vector equation reads
\begin{equation}
V_I'(t)
=
\lambda_{HV}(H_I(t))
\left(
V(t)-V_I(t)
\right)
+
\alpha b(t)V_I(t)
-
\mu(t)V_I(t).
\end{equation}

Since,
\begin{align}
V'(t)-r(t)V(t)
&=
\left(
b(t)-\mu(t)
\right)V(t)
-
\left(
\alpha b(t)-\mu(t)
\right)V(t)
\nonumber\\
&=
(1-\alpha)b(t)V(t),
\end{align}
the compatibility condition introduced in
Appendix~\ref{apx:critera_A4_A5} is then
\begin{equation}
(1-\alpha)b(t)V(t)
\geq0.
\label{eq:apx_vertical_compatibility}
\end{equation}
Hence the demographic construction is automatically compatible with
$V_I(t)\leq V(t)$ whenever $b(t)\geq0$ and $\alpha\in[0,1]$.

In particular, in the absence of vertical transmission,
\begin{equation}
V'(t)+\mu(t)V(t)
=
b(t)V(t),
\end{equation}
and equation~\eqref{eq:apx_vertical_compatibility} reduces to
\begin{equation}
b(t)V(t)
\geq0.
\label{eq:apx_novertical_compatibility}
\end{equation}
This is the compatibility condition used for the prescribed vector abundance
in the main application.

\paragraph{Local reproduction index of the frozen linear model.}

We now compute the local reproduction index of the epidemiological
environment frozen at the introduction time $t_0$, for the linear infection
pressures
\begin{equation}
\lambda_{VH}(V_I)
=
\beta_{vh}V_I,
\qquad
\lambda_{HV}(H_I)
=
\beta_{hv}H_I.
\end{equation}

Set
\begin{equation}
V_0:=V(t_0),
\qquad
b_0:=b(t_0),
\qquad
\mu_0:=\mu(t_0).
\end{equation}
The corresponding disease-free state is
\begin{equation}
(H_S,H_I,H_R,V_S,V_I)
=
(H_{S,0},0,H_{R,0},V_0,0).
\end{equation}

Linearising the infected subsystem around this state gives
\begin{equation}
\begin{pmatrix}
H_I\\
V_I
\end{pmatrix}'
=
\begin{pmatrix}
-\gamma
&
\beta_{vh}H_{S,0}
\\
\beta_{hv}V_0
&
\alpha b_0-\mu_0
\end{pmatrix}
\begin{pmatrix}
H_I\\
V_I
\end{pmatrix}.
\label{eq:apx_linear_infected_vertical}
\end{equation}

The next-generation decomposition separates new infection terms from
transition and removal terms. Let
\begin{equation}
\mathsf F
=
\begin{pmatrix}
0
&
\beta_{vh}H_{S,0}
\\
\beta_{hv}V_0
&
\alpha b_0
\end{pmatrix},
\qquad
\mathsf T
=
\begin{pmatrix}
\gamma & 0\\
0 & \mu_0
\end{pmatrix}.
\end{equation}
The next-generation matrix is
\begin{equation}
\mathsf K
=
\mathsf F\mathsf T^{-1}
=
\begin{pmatrix}
0
&
\dfrac{\beta_{vh}H_{S,0}}{\mu_0}
\\[0.9em]
\dfrac{\beta_{hv}V_0}{\gamma}
&
\dfrac{\alpha b_0}{\mu_0}
\end{pmatrix}.
\label{eq:apx_NGM_vertical}
\end{equation}

Define
\begin{equation}
\mathcal R_{l,VH}(t_0)
:=
\frac{\beta_{vh}H_{S,0}}{\mu_0},
\qquad
\mathcal R_{l,HV}(t_0)
:=
\frac{\beta_{hv}V_0}{\gamma},
\end{equation}
and
\begin{equation}
\mathcal R_{l,VV}(t_0)
:=
\frac{\alpha b_0}{\mu_0}.
\end{equation}
Then
\begin{equation}
\mathsf K
=
\begin{pmatrix}
0
&
\mathcal R_{l,VH}(t_0)
\\
\mathcal R_{l,HV}(t_0)
&
\mathcal R_{l,VV}(t_0)
\end{pmatrix}.
\end{equation}

The local reproduction index of the frozen system is the spectral radius of
$\mathsf K$:
\begin{equation}
\mathcal R_l(t_0)
=
\frac{
\mathcal R_{l,VV}(t_0)
+
\sqrt{
\mathcal R_{l,VV}(t_0)^2
+
4
\mathcal R_{l,VH}(t_0)
\mathcal R_{l,HV}(t_0)
}
}
{2}.
\label{eq:apx_Rl_vertical}
\end{equation}

In the absence of vertical transmission, $\alpha=0$ and therefore
\begin{equation}
\mathcal R_{l,VV}(t_0)=0.
\end{equation}
Equation~\eqref{eq:apx_Rl_vertical} then reduces to
\begin{equation}
\mathcal R_l(t_0)
=
\sqrt{
\mathcal R_{l,VH}(t_0)
\mathcal R_{l,HV}(t_0)
}
=
\sqrt{
H_{S,0}V_0
\frac{
\beta_{hv}\beta_{vh}
}{
\gamma\mu_0
}
}.
\label{eq:apx_Rl_no_vertical}
\end{equation}


\clearpage

\section{Admissibility conditions for the infectious vector compartment}
\label{apx:critera_A4_A5}

We derive sufficient conditions ensuring that the infectious vector
compartment remains biologically admissible,
\begin{equation}
0\leq V_I(t)\leq V(t),
\end{equation}
and therefore that
\begin{equation}
V_S(t)=V(t)-V_I(t)\geq0.
\end{equation}

Consider
\begin{equation}
V_I'(t)
=
\Lambda(t)
\left(
V(t)-V_I(t)
\right)
+
r(t)V_I(t),
\label{eq:apx_VI_generic}
\end{equation}
where
\begin{equation}
\Lambda(t)
:=
\lambda_{HV}\left(\mathbf H_I(t)\right).
\end{equation}
Assume that $V$, $\Lambda$, and $r$ are continuous on $[t_0,T]$, with
$V(t)\geq0$, and let
\begin{equation}
0\leq V_I(t_0)\leq V(t_0).
\end{equation}

\paragraph{Non-negativity of $V_I$:
criterion~\ref{item:lambda_HV_positive}.}

Equation~\eqref{eq:apx_VI_generic} can be written as
\begin{equation}
V_I'(t)
+
\left(
\Lambda(t)-r(t)
\right)V_I(t)
=
\Lambda(t)V(t).
\end{equation}
Variation of constants gives
\begin{align}
V_I(t)
&=
V_I(t_0)
\exp
\left(
-\int_{t_0}^{t}
\left(
\Lambda(u)-r(u)
\right)\,du
\right)
\nonumber\\
&\quad
+
\int_{t_0}^{t}
\Lambda(s)V(s)
\exp
\left(
-\int_s^t
\left(
\Lambda(u)-r(u)
\right)\,du
\right)
\,ds.
\label{eq:apx_VI_variation_general}
\end{align}

Hence
\begin{equation}
\Lambda(t)\geq0
\qquad
\text{for all }t\in[t_0,T]
\label{eq:apx_lambda_positive_condition}
\end{equation}
is sufficient to ensure
\begin{equation}
V_I(t)\geq0
\qquad
\text{for all }t\in[t_0,T].
\end{equation}

\paragraph{Upper bound $V_I\leq V$:
criterion~\ref{item:V_r_compatible}.}

Let
\begin{equation}
Y(t):=V(t)-V_I(t)=V_S(t).
\end{equation}
Using equation~\eqref{eq:apx_VI_generic},
\begin{align}
Y'(t)
&=
V'(t)-V_I'(t)
\nonumber\\
&=
V'(t)-r(t)V(t)
+
\left(
r(t)-\Lambda(t)
\right)Y(t).
\end{align}
Hence
\begin{equation}
Y'(t)
+
\left(
\Lambda(t)-r(t)
\right)Y(t)
=
V'(t)-r(t)V(t).
\end{equation}
Variation of constants gives
\begin{align}
Y(t)
&=
Y(t_0)
\exp
\left(
-\int_{t_0}^{t}
\left(
\Lambda(u)-r(u)
\right)\,du
\right)
\nonumber\\
&\quad
+
\int_{t_0}^{t}
\left(
V'(s)-r(s)V(s)
\right)
\exp
\left(
-\int_s^t
\left(
\Lambda(u)-r(u)
\right)\,du
\right)
\,ds.
\label{eq:apx_VS_variation_general}
\end{align}

Since $Y(t_0)\geq0$, the condition
\begin{equation}
V'(t)-r(t)V(t)\geq0
\qquad
\text{for all }t\in[t_0,T]
\label{eq:apx_Vr_compatibility}
\end{equation}
is sufficient to ensure
\begin{equation}
Y(t)\geq0,
\end{equation}
and therefore
\begin{equation}
V_I(t)\leq V(t)
\qquad
\text{for all }t\in[t_0,T].
\end{equation}

Combining
equations~\eqref{eq:apx_lambda_positive_condition}
and~\eqref{eq:apx_Vr_compatibility}, we obtain the sufficient admissibility
conditions
\begin{equation}
0\leq V_I(t)\leq V(t)
\qquad
\text{for all }t\in[t_0,T].
\end{equation}

In the application without vertical transmission,
$r(t)=-\mu(t)$, and the compatibility condition becomes
\begin{equation}
V'(t)+\mu(t)V(t)\geq0.
\label{eq:apx_main_compatibility}
\end{equation}


\clearpage

\section{Regularisation of prescribed vector abundance}
\label{apx:data_reg}

When the prescribed vector abundance is obtained from observations or from an
external model, it may not satisfy the compatibility condition
\begin{equation}
V'(t)-r(t)V(t)\geq0
\qquad
\text{for all }t\in[t_0,T]
\end{equation}
required by Appendix~\ref{apx:critera_A4_A5}. We therefore introduce a
smoothing procedure designed to produce a regularised trajectory compatible
with the specified demographic term $r(t)$.

Let $V\in L^1([t_0,T])$ be non-negative and not identically zero. For
$u>0$, define the Gaussian kernel
\begin{equation}
G_u(x)
=
\frac{1}{\sqrt{2\pi}\,u}
\exp
\left(
-\frac{x^2}{2u^2}
\right)
\end{equation}
and the boundary-corrected Gaussian smoother
\begin{equation}
V_u(t)
=
\frac{
\displaystyle
\int_{t_0}^{T}
V(s)G_u(t-s)\,ds
}{
\displaystyle
\int_{t_0}^{T}
G_u(t-s)\,ds
},
\qquad
t\in[t_0,T].
\label{eq:apx_Vu}
\end{equation}

The denominator corrects for truncation of the kernel near the boundaries of
the observation interval. In particular, the operator preserves constant
trajectories. For every $u>0$, $V_u$ is smooth on $[t_0,T]$.

For a prescribed minimum smoothing scale $u_{\min}>0$, define the set of
admissible smoothing scales
\begin{equation}
\mathcal U
:=
\left\{
u\geq u_{\min}:
V_u'(t)-r(t)V_u(t)\geq0
\text{ for every }t\in[t_0,T]
\right\}.
\label{eq:apx_U_regularisation}
\end{equation}

Whenever $\mathcal U$ is non-empty, we define
\begin{equation}
\sigma^\star
:=
\min\mathcal U.
\label{eq:apx_sigma_star}
\end{equation}
The regularised prescribed abundance is then
\begin{equation}
V_{\mathrm{reg}}(t)
:=
V_{\sigma^\star}(t).
\end{equation}

The minimum in equation~\eqref{eq:apx_sigma_star} is well defined whenever
$\mathcal U$ is non-empty. Indeed, for $u>0$ the mapping
\begin{equation}
u
\mapsto
\min_{t\in[t_0,T]}
\left(
V_u'(t)-r(t)V_u(t)
\right)
\end{equation}
is continuous, so that $\mathcal U$ is closed.

In the main application,
\begin{equation}
r(t)=-\mu(t),
\qquad
\mu(t)\geq\underline\mu>0.
\end{equation}
For large $u$, $V_u$ converges uniformly to the temporal average of $V$ and
$V_u'$ converges uniformly to zero. Since $V$ is non-negative and not
identically zero, its temporal average is strictly positive. Consequently,
\begin{equation}
V_u'(t)+\mu(t)V_u(t)>0
\end{equation}
for sufficiently large $u$, and $\mathcal U$ is therefore non-empty in the
setting used in the main text.

In the numerical implementation, we impose
\begin{equation}
u_{\min}=1~\text{day}
\end{equation}
to avoid smoothing at sub-daily time scales. Figure
\ref{fig:apx:V_regularisation} illustrates the resulting regularisation.

\begin{figure}[h!]
    \centering
    \includegraphics[width=0.7\linewidth, clip, trim={0 0 0 34}]{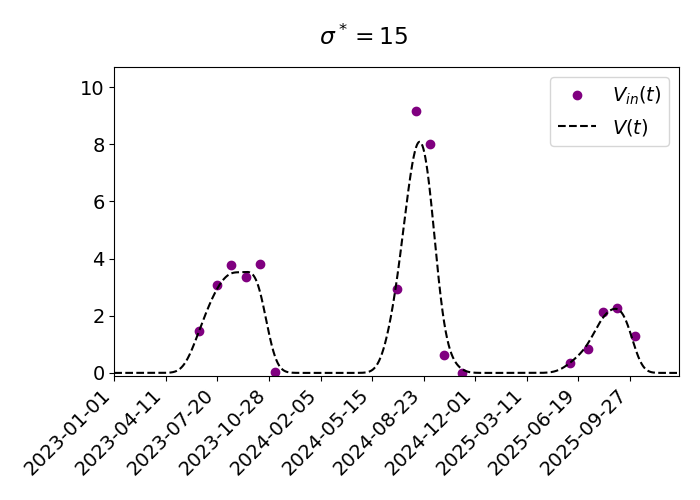}
    \caption{\textit{
    Vector-abundance proxy derived from \Aa\ ovitrap data collected in
    Decize, France, between 2023 and 2025 (dots), together with the
    regularised trajectory obtained using the smallest admissible Gaussian
    smoothing scale $\sigma^\star = 15$ (line).
    }}
    \label{fig:apx:V_regularisation}
\end{figure}


\clearpage

\section{Temperature-dependent survival of \Aa}
\label{apx:T}

\subsection{Adult female temperature-dependent death rate}
\label{apx:T_mu_adult}

\Aa\ is an ectothermic species, and temperature affects several components of
its life cycle, including development and adult survival.

In the present model, the vector death rate $\mu$ is temperature dependent
and is derived from the female \Aa\ longevity data reported by Brady et al.
\cite{brady_modelling_2013,brady_data_2013}. The 50\% survival curve is used
to parameterise the temperature dependence of adult female longevity in the
model.

To prevent unrealistically large removal rates when the fitted longevity
becomes very small, adult female longevity is bounded below by one hour in
the numerical implementation.
Besides, $\mu$ is smoothed to prevent any discontinuity and, thus, to make sure the admissibility condition \ref{item:r_continuous} is satisfied.
The corresponding relationship between
temperature and vector longevity is illustrated in
Figure~\ref{fig:apx:T}.

Hourly temperature data on an $8\times8$~km$^2$ grid covering France were
obtained from the SAFRAN reanalysis database
\cite{durand_reanalysis_2009}.

\begin{figure}[h]
    \centering
    \includegraphics[width=0.9\linewidth]{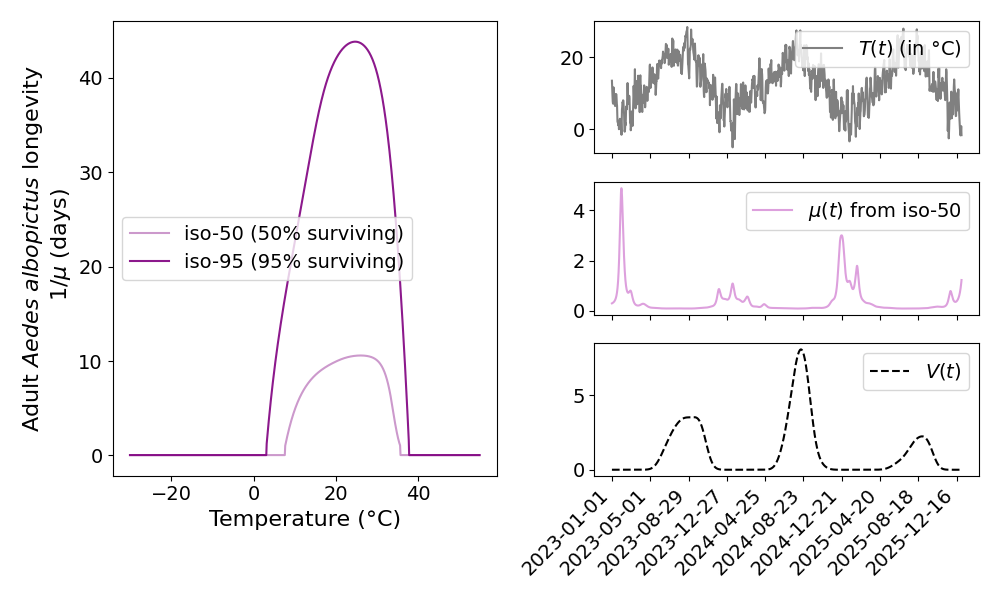}
    \caption{\textit{
    \textbf{Left:} Female \Aa\ longevity as a function of temperature under
    field conditions. The curves correspond to 50\% and 95\% survival
    probabilities. Data source: Brady et al. (2013)
    \cite{brady_data_2013}.
    \textbf{Right:} Mean daily temperature $T(t)$, associated removal rate from the
    infectious vector compartment $\mu(t)$, and prescribed vector abundance
    over time in Decize, France.
    }}
    \label{fig:apx:T}
\end{figure}

\subsection{Larval temperature-dependent survival rate}
\label{apx:larval_survival}

The temperature-dependent survival rate from egg hatching to adult emergence
in \Aa\ was experimentally measured by Delatte et al.
\cite{delatte_influence_2009}. The experimental data and the fitted
relationship used in the present study are shown in
Figure~\ref{fig:larval_survival}.

\begin{figure}[h]
    \centering
    \includegraphics[width=0.7\linewidth]{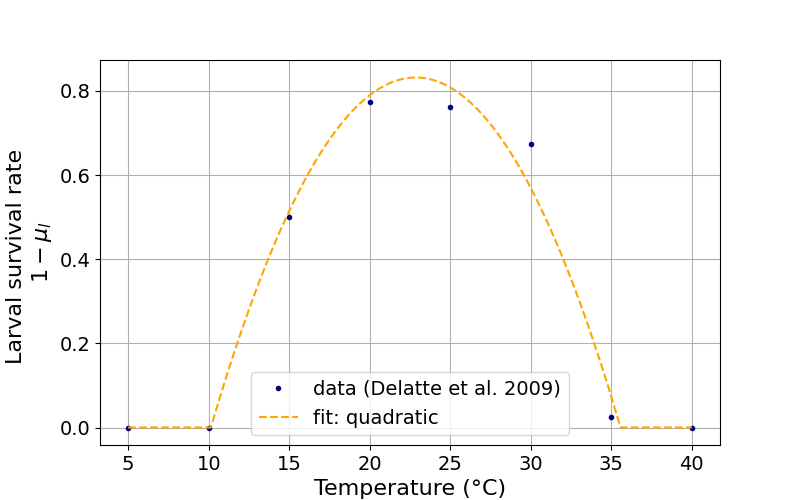}
    \caption{\textit{
    \textbf{Survival rate from hatching to adult emergence of \Aa} measured by
    Delatte et al. at eight constant temperatures
    \cite{delatte_influence_2009}.
    }}
    \label{fig:larval_survival}
\end{figure}


\clearpage

\section{Well-posedness of the \Aa-human arbovirus model}
\label{apx:well-pos-example}

We verify here that the \Aa-human arbovirus model introduced in
Section~\ref{sec:Aa-human-model} satisfies the regularity and admissibility
conditions used in the general modelling framework.

Throughout this appendix, the biological state space is restricted to
non-negative host compartments satisfying
\begin{equation}
H_S(t)+H_I(t)+H_R(t)=H_0<+\infty
\end{equation}
and to vector compartments satisfying
\begin{equation}
0\leq V_I(t)\leq V(t).
\end{equation}

\paragraph{Host dynamics:
criterion~\ref{item:F_H_Lz}.}

The host field is
\begin{equation}
\mathbf F_H(H_S,H_I,H_R,V_I)
=
\begin{pmatrix}
-\beta_{vh}V_IH_S
\\
\beta_{vh}V_IH_S-\gamma H_I
\\
\gamma H_I
\end{pmatrix}.
\label{eq:apx_host_field}
\end{equation}
It is polynomial in its state variables and is therefore locally Lipschitz on
$\mathbb R^4$. In particular, it is Lipschitz on every bounded subset of the
biological state space.

\paragraph{Host-to-vector infection pressure:
criterion~\ref{item:lambda_HV_Lz}.}

For the standard linear model,
\begin{equation}
\lambda_{HV}(H_I)
=
\beta_{hv}H_I.
\end{equation}
This function is globally Lipschitz. On the biological host simplex,
\begin{equation}
0\leq H_I(t)\leq H_0,
\end{equation}
and therefore
\begin{equation}
0\leq
\lambda_{HV}(H_I(t))
\leq
\beta_{hv}H_0.
\end{equation}
Thus, although the linear map is not bounded on the whole real line, it is
uniformly bounded along all biologically admissible trajectories.

For the shrinking-adjusted model introduced in
Appendix~\ref{apx:emgc_ext_under_shrinking},
\begin{equation}
\lambda_{HV}(H_I)
=
\beta_{hv}g(H_I),
\end{equation}
where $g$ is locally Lipschitz and satisfies
\begin{equation}
0\leq g(H_I)\leq H_I
\qquad
\text{for }H_I\geq0.
\end{equation}
The same boundedness property therefore holds on the biological host simplex.

\paragraph{Continuity of the demographic term:
criterion~\ref{item:r_continuous}.}

In the main application,
\begin{equation}
r(t)
=
-\mu(T(t)).
\end{equation}
If the temperature trajectory $T(t)$ is continuous and the fitted mortality
function $\mu(T)$ is continuous, as assumed in
Appendix~\ref{apx:T_mu_adult}, then
\begin{equation}
t\mapsto r(t)
\end{equation}
is continuous.

\paragraph{Non-negativity of the infection pressure:
criterion~\ref{item:lambda_HV_positive}.}

On the biological state space, $H_I\geq0$, and therefore the standard
infection pressure
\begin{equation}
\lambda_{HV}(H_I)=\beta_{hv}H_I
\end{equation}
is non-negative.

If an extension to the whole real line is required for the application of a
general existence theorem, the linear pressure may be extended as
\begin{equation}
\lambda_{HV}^{\mathrm{ext}}(H_I)
=
\beta_{hv}\max\{H_I,0\}.
\end{equation}
This extension is globally Lipschitz and non-negative and coincides with the
original infection pressure for $H_I\geq0$.

The shrinking-adjusted function $g$ introduced in
Appendix~\ref{apx:emgc_ext_under_shrinking} is already extended by
\begin{equation}
g(H_I)=0
\qquad
\text{for }H_I\leq0,
\end{equation}
and is therefore non-negative on $\mathbb R$.

\paragraph{Compatibility between prescribed abundance and vector demography:
criterion~\ref{item:V_r_compatible}.}

Since
\begin{equation}
r(t)=-\mu(t),
\end{equation}
the compatibility condition from
Appendix~\ref{apx:critera_A4_A5} becomes
\begin{equation}
V'(t)+\mu(t)V(t)\geq0.
\label{eq:apx_application_compatibility}
\end{equation}
When the raw prescribed abundance does not satisfy
equation~\eqref{eq:apx_application_compatibility}, the regularisation
procedure of Appendix~\ref{apx:data_reg} is applied.

Consequently, under the regularity assumptions stated above and the
compatibility condition
\eqref{eq:apx_application_compatibility}, the \Aa-human model remains in the
biologically admissible region
\begin{equation}
H_S,H_I,H_R\geq0,
\qquad
0\leq V_I\leq V,
\qquad
V_S=V-V_I\geq0.
\end{equation}


\clearpage

\section{Extinction and finite-amplitude epidemic growth under shrinking adjustment}
\label{apx:emgc_ext_under_shrinking}

A classical limitation of deterministic epidemic models arises when infected
populations become very small. In the present setting, seasonal decreases in
vector abundance may drive the infected host and vector compartments towards
arbitrarily small but strictly positive values. Such residual densities may
subsequently seed renewed transmission when vector abundance increases again,
leading to an artificial inter-seasonal re-emergence.

To mitigate this deterministic artefact, we introduced, in
equation~\eqref{eq:lambdahv_atto}, the shrinking adjustment
\begin{equation}
\lambda_{HV}(H_I)
=
\beta_{hv}g(H_I),
\qquad
g(H_I)
=
\begin{cases}
0,
& \text{if }H_I\leq0,
\\[4pt]
\dfrac{H_I^2}{H_\star},
& \text{if }0<H_I<H_\star,
\\[6pt]
H_I,
& \text{if }H_I\geq H_\star.
\end{cases}
\label{eq:apx_shrinking}
\end{equation}
The extension $g(H_I)=0$ for $H_I<0$ does not modify the dynamics in the
biologically relevant domain $H_I\geq0$, but makes $g$ locally Lipschitz on
$\mathbb R$ and facilitates the invariance argument below.

In the application considered in the main text,
\begin{equation}
H_\star=\frac{1}{|\Omega|},
\end{equation}
so that the adjustment acts when the infectious-host density corresponds to
fewer than one infectious individual in the spatial domain $\Omega$.

The objective of this appendix is twofold. First, we establish a sufficient
condition ensuring extinction of the infectious compartments in a frozen
epidemiological environment. Second, we show that the shrinking adjustment
does not prevent epidemic growth from a finite-amplitude introduction.

These results are derived for the \Aa-human arbovirus model introduced in
Section~\ref{sec:Aa-human-model}. Since the full model is non-autonomous, the
analysis in this appendix is conducted in the epidemiological environment
frozen at the time of pathogen introduction $t_0$.

\subsection*{Frozen admissible system}

Set
\begin{equation}
V_0:=V(t_0),
\qquad
\mu_0:=\mu(t_0),
\end{equation}
and assume
\begin{equation}
V_0>0,
\qquad
\mu_0>0.
\end{equation}

Freezing the prescribed vector abundance means imposing
\begin{equation}
V(t)=V_0
\qquad
\text{for all }t\geq t_0.
\end{equation}
Population conservation therefore gives
\begin{equation}
V_S(t)=V_0-V_I(t).
\label{eq:apx_VS_conservation}
\end{equation}

The resulting frozen system is
\begin{equation}
\left\{
\begin{aligned}
H_S'(t)
&=
-\beta_{vh}V_I(t)H_S(t),
\\
H_I'(t)
&=
\beta_{vh}V_I(t)H_S(t)-\gamma H_I(t),
\\
H_R'(t)
&=
\gamma H_I(t),
\\
V_I'(t)
&=
\beta_{hv}g(H_I(t))
\left(
V_0-V_I(t)
\right)
-\mu_0V_I(t),
\\
V_S(t)
&=
V_0-V_I(t).
\end{aligned}
\right.
\label{eq:apx_frozen_shrinking}
\end{equation}

Let
\begin{equation}
H_0
:=
H_S(t_0)+H_I(t_0)+H_R(t_0).
\end{equation}
The biologically admissible state space for the reduced system is
\begin{equation}
\mathcal D_{V_0}
=
\left\{
(H_S,H_I,H_R,V_I)\in\mathbb R_+^4:
H_S+H_I+H_R=H_0,\;
0\leq V_I\leq V_0
\right\}.
\end{equation}

The set $\mathcal D_{V_0}$ is closed and convex. We verify positive
invariance using the tangent-cone criterion.

First,
\begin{equation}
H_S'+H_I'+H_R'=0,
\end{equation}
so the vector field is tangent to the hyperplane
\begin{equation}
H_S+H_I+H_R=H_0.
\end{equation}

For each active non-negativity constraint,
\begin{align}
H_S'&=0
&&\text{when }H_S=0,
\\
H_I'&=\beta_{vh}V_IH_S\geq0
&&\text{when }H_I=0,
\\
H_R'&=\gamma H_I\geq0
&&\text{when }H_R=0.
\end{align}
At the boundaries of the vector interval,
\begin{equation}
V_I'
=
\beta_{hv}g(H_I)V_0
\geq0
\qquad
\text{when }V_I=0,
\end{equation}
whereas
\begin{equation}
V_I'
=
-\mu_0V_0
<0
\qquad
\text{when }V_I=V_0.
\end{equation}

These conditions hold simultaneously when several constraints are active.
Therefore, at every boundary point of $\mathcal D_{V_0}$, the vector field
belongs to the tangent cone of $\mathcal D_{V_0}$. Since
$\mathcal D_{V_0}$ is closed and convex and the vector field in
equation~\eqref{eq:apx_frozen_shrinking} is locally Lipschitz, the Nagumo
tangent-cone invariance criterion implies that $\mathcal D_{V_0}$ is
positively invariant.

Consequently,
\begin{equation}
H_S(t),H_I(t),H_R(t)\geq0,
\qquad
0\leq V_I(t)\leq V_0
\end{equation}
for all $t\geq t_0$, and
equation~\eqref{eq:apx_VS_conservation} gives
\begin{equation}
0\leq V_S(t)\leq V_0.
\end{equation}

Since $\mathcal D_{V_0}$ is compact and positively invariant, solutions
starting in $\mathcal D_{V_0}$ are defined for all $t\geq t_0$.

In the absence of vertical transmission, the local reproduction index of the
corresponding frozen model with standard linear infection pressure is, from
Appendix~\ref{apx:r(t)},
\begin{equation}
\mathcal R_l(t_0)
=
\sqrt{
H_{S,0}V_0
\frac{\beta_{hv}\beta_{vh}}
{\gamma\mu_0}
},
\qquad
H_{S,0}:=H_S(t_0).
\label{eq:apx_Rref}
\end{equation}

Throughout this appendix, $\mathcal R_l(t_0)$ refers to the index of the
\emph{corresponding linear frozen model}. It is not a local reproduction
threshold for the shrinking-adjusted system.

Indeed, at a disease-free state with susceptible host density
$\overline H_S$, the linearised infected subsystem of
equation~\eqref{eq:apx_frozen_shrinking} is
\begin{equation}
\begin{pmatrix}
H_I\\
V_I
\end{pmatrix}'
=
\begin{pmatrix}
-\gamma
&
\beta_{vh}\overline H_S
\\
0
&
-\mu_0
\end{pmatrix}
\begin{pmatrix}
H_I\\
V_I
\end{pmatrix},
\end{equation}
because
\begin{equation}
g'(0)=0.
\end{equation}
Its eigenvalues are $-\gamma$ and $-\mu_0$. Thus the disease-free set is
locally attracting in the infected directions for all parameter values.
Epidemic growth under the shrinking adjustment is therefore a
finite-amplitude nonlinear phenomenon.

\subsection*{A sufficient condition for extinction}

We first show that the classical threshold of the corresponding linear model
remains a sufficient condition for extinction under the shrinking adjustment.

\begin{proposition}
\label{prop:extinction}
Assume that
\begin{equation}
\mathcal R_l(t_0)<1.
\label{eq:apx_extinction_condition}
\end{equation}
Then, for every solution of
equation~\eqref{eq:apx_frozen_shrinking} starting in
$\mathcal D_{V_0}$, the infectious compartments converge exponentially to
zero:
\begin{equation}
H_I(t)\to0,
\qquad
V_I(t)\to0
\qquad
\text{as }t\to+\infty.
\end{equation}
Consequently, the corresponding full state converges exponentially fast to a disease-free state
of the form
\begin{equation}
\left(
H_{S,\infty},
0,
H_{R,\infty},
V_0,
0
\right).
\end{equation}
\end{proposition}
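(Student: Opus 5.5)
The plan is to dominate the shrinking-adjusted infected subsystem by a linear cooperative system whose matrix is Hurwitz precisely when $\mathcal R_l(t_0)<1$. By positive invariance of $\mathcal D_{V_0}$ (established above via Nagumo's criterion), along any trajectory we have $H_S(t)\le H_{S,0}$ (since $H_S'\le 0$), $0\le V_0-V_I\le V_0$, and $0\le g(H_I)\le H_I$ for $H_I\ge 0$. We use the last inequality in the form $g(H_I)\le H_I$ on $[0,H_\star)$, where $H_I^2/H_\star\le H_I$. Hence
\begin{equation*}
H_I'\le \beta_{vh}H_{S,0}V_I-\gamma H_I,\qquad V_I'\le \beta_{hv}V_0H_I-\mu_0V_I .
\end{equation*}
So $x=(H_I,V_I)\ge 0$ satisfies $x'\le Mx$, where
\begin{equation*}
M=\begin{pmatrix}-\gamma & \beta_{vh}H_{S,0}\\ \beta_{hv}V_0 & -\mu_0\end{pmatrix}
\end{equation*}
is a Metzler matrix.

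Rather than invoke a comparison theorem for cooperative systems, I would use a linear Lyapunov function $L=c_1H_I+c_2V_I$ with $c_1,c_2>0$. We need $c^\top M<-\varepsilon c^\top$ componentwise, i.e.
\begin{equation*}
-\gamma c_1+\beta_{hv}V_0c_2<0,\qquad \beta_{vh}H_{S,0}c_1-\mu_0c_2<0 .
\end{equation*}
These are compatible iff $\beta_{vh}H_{S,0}/\mu_0< c_2/c_1<\gamma/(\beta_{hv}V_0)$. This interval is non-empty exactly when $\beta_{vh}\beta_{hv}H_{S,0}V_0<\gamma\mu_0$, that is, when $\mathcal R_l(t_0)^2<1$. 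Choosing $c_2/c_1$ in it gives $L'\le -\varepsilon L$ for some $\varepsilon>0$, and therefore $L(t)\le L(t_0)e^{-\varepsilon(t-t_0)}$. Because $H_I,V_I\ge 0$, both decay exponentially.

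For the full state, $H_S$ is nonincreasing and bounded below, so it converges. Moreover $|H_S'|\le \beta_{vh}H_0V_I$ is exponentially integrable, so $H_S\to H_{S,\infty}$ at an exponential rate. Likewise $H_R'=\gamma H_I$ is exponentially small, so $H_R\to H_{R,\infty}$ exponentially. Finally $V_S=V_0-V_I\to V_0$ exponentially. Global existence holds by compactness of $\mathcal D_{V_0}$.

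The step needing care is the monotonicity $H_S(t)\le H_{S,0}$. This is what allows the time-varying coefficient $\beta_{vh}H_S(t)$ to be replaced by its value at $t_0$, so that the Hurwitz condition matches exactly the threshold $\mathcal R_l(t_0)<1$ defined with $H_{S,0}$.
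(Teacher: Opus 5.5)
Your proof is correct and is essentially the paper's argument: the paper uses the same linear Lyapunov function $L=\eta H_I+V_I$ with $\eta\in\bigl(\beta_{hv}V_0/\gamma,\ \mu_0/(\beta_{vh}H_{S,0})\bigr)$, which is your $c_1/c_2=\eta$, together with the same three bounds $g(H_I)\le H_I$, $V_0-V_I\le V_0$ and $H_S\le H_{S,0}$. Your ratio parametrisation also covers $H_{S,0}=0$ without the separate case the paper treats. In addition, your estimate $|H_S'|\le\beta_{vh}H_0V_I$ supplies the exponential rate for $H_S$ and $H_R$ that the statement claims, whereas the paper's proof only establishes their convergence by monotonicity.
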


\begin{proof}

By construction,
\begin{equation}
0\leq g(H_I)\leq H_I
\qquad
\text{for all }H_I\geq0.
\label{eq:apx_g_upper}
\end{equation}

If $H_{S,0}=0$, then
\begin{equation}
H_S(t)=0
\qquad
\text{for all }t\geq t_0
\end{equation}
and
\begin{equation}
H_I(t)
=
H_I(t_0)e^{-\gamma(t-t_0)}.
\end{equation}
Moreover,
\begin{equation}
V_I'(t)
\leq
\beta_{hv}V_0H_I(t)
-\mu_0V_I(t),
\end{equation}
so a standard comparison argument shows that $V_I$ also converges
exponentially to zero. We may therefore assume $H_{S,0}>0$.

Condition~\eqref{eq:apx_extinction_condition} is equivalent to
\begin{equation}
\frac{\beta_{hv}V_0}{\gamma}
<
\frac{\mu_0}{\beta_{vh}H_{S,0}}.
\end{equation}
Hence there exists $\eta>0$ such that
\begin{equation}
\frac{\beta_{hv}V_0}{\gamma}
<
\eta
<
\frac{\mu_0}{\beta_{vh}H_{S,0}}.
\label{eq:apx_eta}
\end{equation}

Consider
\begin{equation}
L(t)
=
\eta H_I(t)+V_I(t).
\label{eq:apx_Lyapunov}
\end{equation}
Along solutions,
\begin{align}
L'(t)
&=
\eta
\left(
\beta_{vh}V_I(t)H_S(t)-\gamma H_I(t)
\right)
\nonumber\\
&\quad
+
\beta_{hv}g(H_I(t))
\left(
V_0-V_I(t)
\right)
-
\mu_0V_I(t).
\end{align}

Using
\begin{equation}
g(H_I)\leq H_I
\end{equation}
and
\begin{equation}
V_0-V_I(t)\leq V_0,
\end{equation}
we obtain
\begin{align}
L'(t)
&\leq
-
\left(
\eta\gamma-\beta_{hv}V_0
\right)H_I(t)
\nonumber\\
&\quad
-
\left(
\mu_0-\eta\beta_{vh}H_S(t)
\right)V_I(t).
\end{align}

Since
\begin{equation}
H_S'(t)
=
-\beta_{vh}V_I(t)H_S(t)
\leq0,
\end{equation}
we have
\begin{equation}
H_S(t)\leq H_{S,0}.
\end{equation}
Therefore,
\begin{equation}
L'(t)
\leq
-AH_I(t)-BV_I(t),
\end{equation}
where
\begin{equation}
A
:=
\eta\gamma-\beta_{hv}V_0
>0,
\qquad
B
:=
\mu_0-\eta\beta_{vh}H_{S,0}
>0.
\end{equation}

Define
\begin{equation}
\kappa
:=
\min
\left\{
\frac{A}{\eta},
B
\right\}
>0.
\end{equation}
Then
\begin{equation}
AH_I(t)+BV_I(t)
\geq
\kappa
\left(
\eta H_I(t)+V_I(t)
\right)
=
\kappa L(t),
\end{equation}
and hence
\begin{equation}
L'(t)
\leq
-\kappa L(t).
\end{equation}

By Gronwall's inequality,
\begin{equation}
L(t)
\leq
L(t_0)
e^{-\kappa(t-t_0)}
\qquad
\text{for all }t\geq t_0.
\end{equation}
Since
\begin{equation}
0\leq V_I(t)\leq L(t),
\qquad
0\leq H_I(t)\leq\frac{L(t)}{\eta},
\end{equation}
both infectious compartments converge exponentially to zero.

Finally, $H_S$ is non-increasing and bounded below by zero, whereas $H_R$
is non-decreasing and bounded above by $H_0$. Both therefore converge, with
\begin{equation}
H_{S,\infty}+H_{R,\infty}=H_0.
\end{equation}
Moreover,
\begin{equation}
V_I(t)\to0,
\qquad
V_S(t)\to V_0.
\end{equation}
\end{proof}

\begin{remark}
Proposition~\ref{prop:extinction} establishes global extinction of the
infectious compartments within the biologically admissible state space.
It does not imply convergence towards one prescribed disease-free
equilibrium, because the final partition of the host population between
susceptible and recovered individuals depends on the epidemic trajectory.
\end{remark}

\subsection*{A sufficient condition for finite-amplitude epidemic growth}

We next derive a sufficient condition ensuring that an introduction
corresponding to one infectious host can produce subsequent positive growth
of the infectious host compartment.

Define
\begin{equation}
a_0
:=
\mu_0+\beta_{hv}H_\star,
\qquad
\rho_0
:=
\frac{a_0}{2\gamma}.
\label{eq:apx_a_rho}
\end{equation}
Let
\begin{equation}
\tau_\star
=
\begin{cases}
\dfrac{\ln\rho_0}
{2\gamma(\rho_0-1)},
& \text{if }\rho_0\neq1,
\\[10pt]
\dfrac{1}{2\gamma},
& \text{if }\rho_0=1,
\end{cases}
\label{eq:apx_tau_star}
\end{equation}
and define
\begin{equation}
f:(0,+\infty)\to(1,+\infty),
\qquad
f(x)
=
\begin{cases}
x^{\frac{1}{x-1}},
& \text{if }x\neq1,
\\[6pt]
e,
& \text{if }x=1.
\end{cases}
\label{eq:apx_f}
\end{equation}

\begin{proposition}
\label{prop:emergence}
Assume
\begin{equation}
H_I(t_0)=H_\star,
\qquad
0\leq V_I(t_0)\leq V_0.
\end{equation}
If
\begin{equation}
V_0
\left(
H_{S,0}-\gamma H_\star\tau_\star
\right)
>
\frac{\gamma a_0}
{\beta_{hv}\beta_{vh}}
f(\rho_0),
\label{eq:apx_emergence_condition}
\end{equation}
then there exists
\begin{equation}
t^\star
\in
(t_0,t_0+\tau_\star]
\end{equation}
such that
\begin{equation}
H_I'(t^\star)>0.
\end{equation}
Thus the infectious host compartment necessarily undergoes a phase of
positive growth after introduction.
\end{proposition}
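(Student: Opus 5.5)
The plan is to argue by contradiction: assume $H_I'(t)\le 0$ for every $t\in(t_0,t_0+\tau_\star]$ and show that this forces $H_I'(t_0+\tau_\star)>0$. Throughout I work in the positively invariant set $\mathcal D_{V_0}$ established above, and write $s=t-t_0$.

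\textbf{Step 1: a priori bounds on the host compartments.} Under the contradiction hypothesis, $H_I$ is non-increasing on the interval, so $H_I(t)\le H_I(t_0)=H_\star$. Hence $g(H_I)=H_I^2/H_\star$ there; this also holds at $H_I=H_\star$. Since $H_I'\ge -\gamma H_I$ always, comparison gives $H_I(t)\ge H_\star e^{-\gamma s}$. For the susceptibles, $H_R'=\gamma H_I\le \gamma H_\star$ gives $H_R(t)\le H_R(t_0)+\gamma H_\star s$. Conservation of $H_S+H_I+H_R$ together with $H_I(t)\le H_I(t_0)$ then yields
\[
H_S(t)\ \ge\ H_{S,0}-\gamma H_\star\tau_\star \qquad\text{on } (t_0,t_0+\tau_\star].
\]

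\textbf{Step 2: lower bound on $V_I$.} Using $g(H_I)\ge H_\star e^{-2\gamma s}$ and $g(H_I)\le H_\star$, I obtain
\[
V_I'\ \ge\ \beta_{hv}H_\star e^{-2\gamma s}V_0-(\mu_0+\beta_{hv}H_\star)V_I
= \beta_{hv}H_\star V_0e^{-2\gamma s}-a_0V_I .
\]
With $V_I(t_0)\ge 0$, a linear comparison (variation of constants, as in Appendix~\ref{apx:critera_A4_A5}) gives
\[
V_I(t_0+s)\ \ge\ \beta_{hv}H_\star V_0\,\frac{e^{-2\gamma s}-e^{-a_0 s}}{a_0-2\gamma},
\]
interpreted as $s\,e^{-2\gamma s}$ when $a_0=2\gamma$. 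The factor $(e^{-2\gamma s}-e^{-a_0s})/(a_0-2\gamma)$ is maximised exactly at $s=\tau_\star=\ln\rho_0/(2\gamma(\rho_0-1))$. A short computation, using $e^{-2\gamma\tau_\star}=\rho_0^{-1/(\rho_0-1)}$ and $a_0-2\gamma=2\gamma(\rho_0-1)$, shows that the maximum equals $1/(a_0 f(\rho_0))$; the case $\rho_0=1$ is obtained by continuity. Therefore
\[
V_I(t_0+\tau_\star)\ \ge\ \frac{\beta_{hv}H_\star V_0}{a_0 f(\rho_0)} .
\]

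\textbf{Step 3: conclusion.} At $t=t_0+\tau_\star$, combining the bounds from Steps 1 and 2 with $\gamma H_I\le\gamma H_\star$ gives
\[
H_I'\ \ge\ \beta_{vh}\frac{\beta_{hv}H_\star V_0}{a_0 f(\rho_0)}\bigl(H_{S,0}-\gamma H_\star\tau_\star\bigr)-\gamma H_\star .
\]
By condition~\eqref{eq:apx_emergence_condition} the right-hand side is strictly positive, contradicting the hypothesis that $H_I'\le 0$ on the whole interval. Hence some $t^\star\in(t_0,t_0+\tau_\star]$ satisfies $H_I'(t^\star)>0$. Note that the condition forces $H_{S,0}-\gamma H_\star\tau_\star>0$, so the sign manipulations in this step are legitimate.

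The main obstacle is Step 2. One must recognise that the quadratic shrinking term should be bounded below through the exponential lower bound on $H_I$ and above by $H_\star$ (which produces $a_0$). One then has to optimise the resulting difference of exponentials; this optimisation is what generates $\tau_\star$ and $f(\rho_0)$, including the degenerate case $\rho_0=1$.
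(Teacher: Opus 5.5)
Your proof is correct and follows the paper's argument essentially step for step: the same contradiction hypothesis, the two-sided bound $H_\star e^{-\gamma s}\le H_I\le H_\star$ fed into the shrinking term, and a linear comparison for $V_I$ with rate $a_0$ whose kernel $\varphi$ equals $1/(a_0f(\rho_0))$ at $\tau_\star$. It also uses the same lower bound $H_S\ge H_{S,0}-\gamma H_\star(t-t_0)$, all evaluated at $t_0+\tau_\star$; the remaining differences are cosmetic (you bound $H_R$ rather than $H_S+H_I$, compare against a differential inequality rather than the exact variation-of-constants formula, and take the contradiction hypothesis on the half-open interval, which is the exact negation of the claim).
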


\begin{proof}

Assume by contradiction that
\begin{equation}
H_I'(t)\leq0
\qquad
\text{for all }
t\in[t_0,t_0+\tau_\star].
\end{equation}
Since
\begin{equation}
H_I(t_0)=H_\star,
\end{equation}
this implies
\begin{equation}
0\leq H_I(t)\leq H_\star
\qquad
\text{for all }
t\in[t_0,t_0+\tau_\star].
\label{eq:apx_HI_upper}
\end{equation}

Moreover,
\begin{equation}
H_I'(t)
\geq
-\gamma H_I(t),
\end{equation}
and therefore
\begin{equation}
H_I(t)
\geq
H_\star
e^{-\gamma(t-t_0)}.
\label{eq:apx_HI_lower}
\end{equation}

Define
\begin{equation}
q(t)
:=
\frac{H_I(t)^2}{H_\star}.
\end{equation}
Then
\begin{equation}
H_\star
e^{-2\gamma(t-t_0)}
\leq
q(t)
\leq
H_\star.
\label{eq:apx_q_bounds}
\end{equation}

Since the two branches of $g$ coincide at $H_I=H_\star$, the infectious
vector equation on this interval is
\begin{align}
V_I'(t)
&=
\beta_{hv}q(t)
\left(
V_0-V_I(t)
\right)
-\mu_0V_I(t)
\nonumber\\
&=
\beta_{hv}V_0q(t)
-
\left(
\mu_0+\beta_{hv}q(t)
\right)V_I(t).
\end{align}

Variation of constants gives
\begin{align}
V_I(t)
&=
V_I(t_0)
\exp
\left(
-\int_{t_0}^{t}
\left(
\mu_0+\beta_{hv}q(u)
\right)\,du
\right)
\nonumber\\
&\quad
+
\beta_{hv}V_0
\int_{t_0}^{t}
q(s)
\exp
\left(
-\int_s^t
\left(
\mu_0+\beta_{hv}q(u)
\right)\,du
\right)
\,ds.
\end{align}

Since
\begin{equation}
q(u)\leq H_\star,
\end{equation}
we have
\begin{equation}
\mu_0+\beta_{hv}q(u)
\leq
a_0.
\end{equation}
Using also the lower bound in
equation~\eqref{eq:apx_q_bounds} and dropping the non-negative initial term,
\begin{equation}
V_I(t)
\geq
\beta_{hv}V_0H_\star
\int_{t_0}^{t}
e^{-2\gamma(s-t_0)}
e^{-a_0(t-s)}
\,ds.
\end{equation}

For $\tau\geq0$, define
\begin{equation}
\varphi(\tau)
:=
\int_0^\tau
e^{-2\gamma s}
e^{-a_0(\tau-s)}
\,ds.
\end{equation}
Then
\begin{equation}
V_I(t)
\geq
\beta_{hv}V_0H_\star
\varphi(t-t_0).
\end{equation}

If $a_0\neq2\gamma$,
\begin{equation}
\varphi(\tau)
=
\frac{
e^{-2\gamma\tau}
-
e^{-a_0\tau}
}{
a_0-2\gamma
},
\end{equation}
whereas, if $a_0=2\gamma$,
\begin{equation}
\varphi(\tau)
=
\tau e^{-2\gamma\tau}.
\end{equation}
In both cases, $\varphi$ reaches its unique maximum at
$\tau=\tau_\star$, and
\begin{equation}
\varphi(\tau_\star)
=
\frac{1}{a_0f(\rho_0)}.
\label{eq:apx_phi_max}
\end{equation}

We now derive a lower bound for $H_S$. Since
\begin{equation}
(H_S+H_I)'
=
-\gamma H_I
\geq
-\gamma H_\star,
\end{equation}
we obtain
\begin{equation}
H_S(t)+H_I(t)
\geq
H_{S,0}+H_\star
-\gamma H_\star(t-t_0).
\end{equation}
Using $H_I(t)\leq H_\star$,
\begin{equation}
H_S(t)
\geq
H_{S,0}
-\gamma H_\star(t-t_0).
\label{eq:apx_HS_lower}
\end{equation}

Condition~\eqref{eq:apx_emergence_condition} implies
\begin{equation}
H_{S,0}
-\gamma H_\star\tau_\star
>0.
\end{equation}
Hence, at $t=t_0+\tau_\star$,
\begin{align}
H_I'(t_0+\tau_\star)
&=
\beta_{vh}
V_I(t_0+\tau_\star)
H_S(t_0+\tau_\star)
-
\gamma H_I(t_0+\tau_\star)
\nonumber\\
&\geq
\beta_{hv}\beta_{vh}
V_0H_\star
\left(
H_{S,0}
-\gamma H_\star\tau_\star
\right)
\frac{1}{a_0f(\rho_0)}
-
\gamma H_\star
\nonumber\\
&=
H_\star
\left[
\frac{
\beta_{hv}\beta_{vh}V_0
\left(
H_{S,0}
-\gamma H_\star\tau_\star
\right)
}{
a_0f(\rho_0)
}
-\gamma
\right].
\end{align}

Condition~\eqref{eq:apx_emergence_condition} therefore implies
\begin{equation}
H_I'(t_0+\tau_\star)>0,
\end{equation}
which is a contradiction.
\end{proof}

\begin{remark}
Condition~\eqref{eq:apx_emergence_condition} is sufficient but not
necessary. It is conservative because the proof replaces the time-varying
coefficient
\begin{equation}
\mu_0+\beta_{hv}q(t)
\end{equation}
by its upper bound
\begin{equation}
a_0=\mu_0+\beta_{hv}H_\star.
\end{equation}
\end{remark}

\begin{remark}
The growth condition is non-vacuous. For fixed
$V_0$, $\gamma$, $\mu_0$, $\beta_{hv}$, $\beta_{vh}$, and $H_\star$,
equation~\eqref{eq:apx_emergence_condition} is equivalent to
\begin{equation}
H_{S,0}
>
\gamma H_\star\tau_\star
+
\frac{
\gamma a_0
}{
\beta_{hv}\beta_{vh}V_0
}
f(\rho_0).
\end{equation}
The right-hand side is finite and independent of $H_{S,0}$.
Consequently, the sufficient condition holds for sufficiently large
susceptible-host density.
\end{remark}

\begin{remark}
Using equation~\eqref{eq:apx_Rref}, the sufficient growth condition can be
written as
\begin{equation}
\mathcal R_l(t_0)^2
\left(
1-
\frac{
\gamma H_\star\tau_\star
}{
H_{S,0}
}
\right)
>
\frac{a_0}{\mu_0}
f(\rho_0).
\label{eq:apx_emergence_Rref}
\end{equation}
Since
\begin{equation}
0<
1-
\frac{
\gamma H_\star\tau_\star
}{
H_{S,0}
}
<1,
\qquad
\frac{a_0}{\mu_0}>1,
\qquad
f(\rho_0)>1,
\end{equation}
this condition necessarily implies
\begin{equation}
\mathcal R_l(t_0)>1.
\end{equation}
This comparison does not make $\mathcal R_l$ a local threshold for the
shrinking-adjusted system: $\mathcal R_l$ remains the reproduction index of
the corresponding linear frozen model.
\end{remark}

\begin{figure}[h!]
    \begin{subfigure}{.49\textwidth}
        \centering
        \includegraphics[width=.98\linewidth]{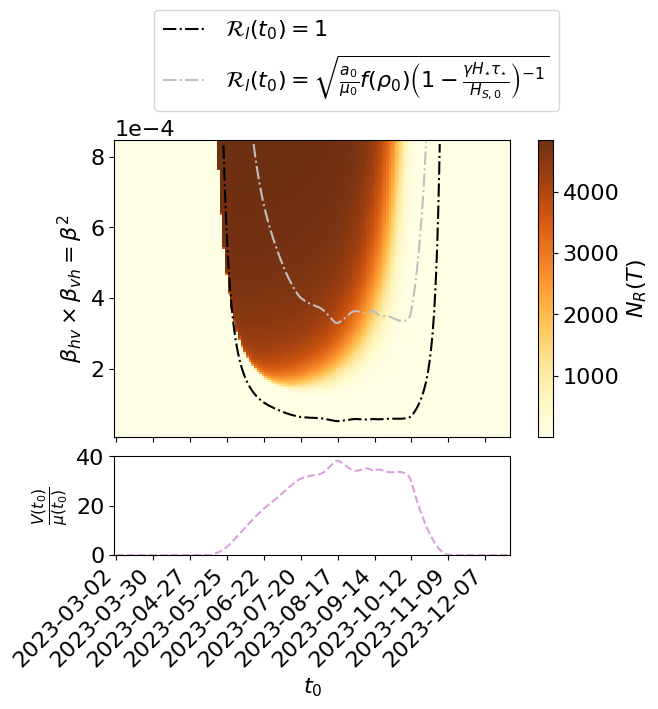}
        \caption{}
        \label{figapx:heatmap}
    \end{subfigure}\hfill
    \begin{subfigure}{.49\textwidth}
        \centering
        \includegraphics[width=.98\linewidth]{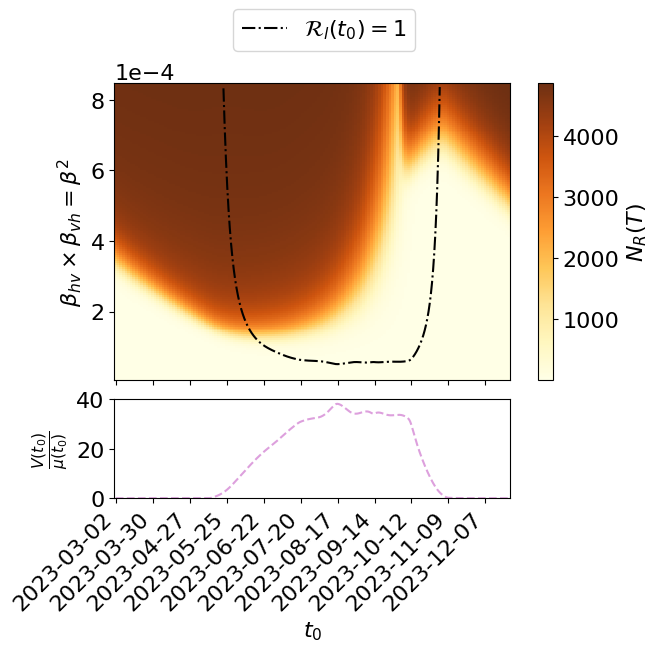}
        \caption{}
        \label{figapx:heatmap_with_sa}
    \end{subfigure}
\caption{\textit{
\textbf{Final recovered-host number $N_R(T)$} as a function of the
transmission rates and of the date of introduction $t_0$ (a) with shrinking adjustment, and (b) without 
shrinking adjustment (linear infection pressure).\\Prescribed vector abundance derived from entomological
surveillance data of \Aa\ collected in Decize, France, in 2023 and
2024. The corresponding evolution of the $V/\mu$ ratio is displayed below.\\The dashed curves represent the frozen-environment analytical criteria
obtained from Propositions~\ref{prop:extinction} and~\ref{prop:emergence},
evaluated at each introduction time $t_0$. For the simulations shown here,
$\gamma=1/5$ days$^{-1}$ and $\beta_{vh}=\beta_{hv}$.}}
\end{figure}

Figure~\ref{figapx:heatmap} illustrates the relationship between the
frozen-environment analytical criteria and numerical epidemic outcomes
obtained under time-varying prescribed vector abundance. For the full
time-varying simulations, these curves should therefore be interpreted as
reference criteria rather than as rigorous sufficient conditions. Their
agreement with numerical outcomes depends on the subsequent temporal
variation of the epidemiological environment.

Figure~\ref{figapx:heatmap_with_sa} emphasizes the importance of introducing shrinking adjustment for the local reproduction index to stay meaningful.


\clearpage

\section{Inter-seasonal extinction under time-varying vector abundance}
\label{apx:proof_winter_extinction}

In Appendix~\ref{apx:emgc_ext_under_shrinking}, extinction and epidemic
growth were analysed in an epidemiological environment frozen at the
introduction time. We now return to the full non-autonomous model, in which
the prescribed vector abundance $V(t)$ and the vector removal rate $\mu(t)$
vary with time.

We show that, under the shrinking adjustment, a sufficiently long period of
sufficiently low vector abundance can drive the infectious compartments into
a region from which they subsequently converge to zero, even if vector
abundance increases afterwards. This provides a deterministic mechanism for
suppressing artificial inter-seasonal re-emergence in the absence of pathogen
reintroduction.

We consider
\begin{equation}
\left\{
\begin{aligned}
H_S'(t)
&=
-\beta_{vh}V_I(t)H_S(t),
\\
H_I'(t)
&=
\beta_{vh}V_I(t)H_S(t)
-\gamma H_I(t),
\\
H_R'(t)
&=
\gamma H_I(t),
\\
V_I'(t)
&=
\beta_{hv}g(H_I(t))
\left(
V(t)-V_I(t)
\right)
-
\mu(t)V_I(t),
\end{aligned}
\right.
\label{eq:apx_nonauto_shrinking}
\end{equation}
where $g$ is defined in equation~\eqref{eq:apx_shrinking}.

Assume
\begin{equation}
V\in C^1([0,+\infty)),
\qquad
\mu\in C([0,+\infty)),
\end{equation}
and suppose that there exist
$\overline V>0$ and $\underline\mu>0$ such that
\begin{equation}
0\leq V(t)\leq\overline V,
\qquad
\mu(t)\geq\underline\mu
\qquad
\text{for all }t\geq t_0.
\label{eq:apx_global_bounds}
\end{equation}

We further assume the compatibility condition
\begin{equation}
V'(t)+\mu(t)V(t)\geq0
\qquad
\text{for all }t\geq t_0.
\label{eq:apx_G_compatibility}
\end{equation}
By Appendix~\ref{apx:critera_A4_A5}, if
\begin{equation}
0\leq V_I(t_0)\leq V(t_0),
\end{equation}
then
\begin{equation}
0\leq V_I(t)\leq V(t)
\qquad
\text{for all }t\geq t_0.
\label{eq:apx_admissible_VI}
\end{equation}

Let $t_1\geq t_0$ satisfy
\begin{equation}
H_S(t_1)>0,
\qquad
H_I(t_1)>0.
\label{eq:apx_winter_initial_positive}
\end{equation}
Since
\begin{equation}
H_S'(t)
=
-\beta_{vh}V_I(t)H_S(t),
\end{equation}
we have
\begin{equation}
H_S(t)>0
\end{equation}
for every finite $t\geq t_1$.

Fix
\begin{equation}
\delta
\in
\left(
0,
\min\{\gamma,\underline\mu\}
\right)
\end{equation}
and define
\begin{equation}
c_\delta(t)
:=
\frac{
\gamma-\delta
}{
\beta_{vh}H_S(t)
}.
\label{eq:apx_cdelta}
\end{equation}
Differentiating gives
\begin{equation}
c_\delta'(t)
=
\frac{
(\gamma-\delta)V_I(t)
}{
H_S(t)
}
\geq0,
\label{eq:apx_cdelta_derivative}
\end{equation}
so $c_\delta$ is non-decreasing.

Define
\begin{equation}
h_\delta
:=
\min
\left\{
\frac{H_\star}{2},
\frac{
H_\star c_\delta(t_1)
(\underline\mu-\delta)
}{
2\beta_{hv}\overline V
}
\right\},
\label{eq:apx_hdelta}
\end{equation}
and
\begin{equation}
\tau_\delta
:=
\max
\left\{
0,
\frac{1}{\gamma}
\ln
\left(
\frac{
2H_I(t_1)
}{
h_\delta
}
\right)
\right\}.
\label{eq:apx_taudelta}
\end{equation}

For every $\tau\geq\tau_\delta$, define
\begin{equation}
\varepsilon_{\delta,\tau}
:=
\min
\left\{
\frac{
\gamma h_\delta
}{
2\beta_{vh}H_S(t_1)
},
\;
c_\delta(t_1)
H_I(t_1)
e^{-\gamma\tau}
\right\}.
\label{eq:apx_epsdelta}
\end{equation}
Under
equation~\eqref{eq:apx_winter_initial_positive},
\begin{equation}
\varepsilon_{\delta,\tau}>0.
\end{equation}

\begin{proposition}
\label{prop:interseasonal}

Let $\delta$, $h_\delta$, $\tau_\delta$, and
$\varepsilon_{\delta,\tau}$ be defined above. For every
$\tau\geq\tau_\delta$, assume
\begin{equation}
V(t)
<
\varepsilon_{\delta,\tau}
\qquad
\text{for all }
t\in[t_1,t_1+\tau].
\label{eq:apx_low_vector_period}
\end{equation}
Set
\begin{equation}
t_2:=t_1+\tau.
\end{equation}
Then
\begin{equation}
H_I'(t)
\leq
-\delta H_I(t)
\leq0
\qquad
\text{for all }t\geq t_2.
\label{eq:apx_HI_decay}
\end{equation}
In particular, both $H_I(t)$ and $V_I(t)$ converge exponentially to zero as
$t\to+\infty$.
\end{proposition}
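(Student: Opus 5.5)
The plan has three stages. First, the low-vector window $[t_1,t_2]$ drives $H_I$ below the shrinking scale. Second, at $t_2$ the state lies in a region in which $V_I$ is dominated by a multiple of $H_I$. Third, that region is forward invariant for all $t\ge t_2$, whatever $V$ does afterwards.

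\emph{Stage 1 (the low-vector period).} By the admissibility result of Appendix~\ref{apx:critera_A4_A5}, on $[t_1,t_2]$ we have $V_I(t)\le V(t)<\varepsilon_{\delta,\tau}$. Since $H_S$ is non-increasing, $H_S(t)\le H_S(t_1)$, and so
\[
H_I'\le \beta_{vh}\varepsilon_{\delta,\tau}H_S(t_1)-\gamma H_I\le \tfrac{\gamma h_\delta}{2}-\gamma H_I .
\]
Comparison then gives $H_I(t)\le H_I(t_1)e^{-\gamma(t-t_1)}+h_\delta/2$. The choice $\tau\ge\tau_\delta$ makes the first term at most $h_\delta/2$, hence $H_I(t_2)\le h_\delta\le H_\star/2$.

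In the other direction, $H_I'\ge-\gamma H_I$ gives $H_I(t_2)\ge H_I(t_1)e^{-\gamma\tau}$. Since $c_\delta$ is non-decreasing by \eqref{eq:apx_cdelta_derivative},
\[
V_I(t_2)\le V(t_2)<\varepsilon_{\delta,\tau}\le c_\delta(t_1)H_I(t_1)e^{-\gamma\tau}\le c_\delta(t_2)H_I(t_2).
\]
So at $t_2$ the state lies strictly inside
\[
\mathcal W:=\{H_I\le h_\delta,\; V_I< c_\delta(t)H_I\}.
\]

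\emph{Stage 2 (the key identity).} Whenever $V_I\le c_\delta(t)H_I$, the definition of $c_\delta$ gives $\beta_{vh}V_IH_S\le(\gamma-\delta)H_I$. Hence $H_I'\le-\delta H_I$, which is exactly \eqref{eq:apx_HI_decay}. In particular $H_I$ keeps decreasing, so the constraint $H_I\le h_\delta<H_\star$ is preserved automatically, and we stay on the quadratic branch $g(H_I)=H_I^2/H_\star\le h_\delta H_I/H_\star$.

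\emph{Stage 3 (invariance, the main obstacle).} I argue by a first-exit time. Let $s>t_2$ be the first time at which $Z:=V_I-c_\delta H_I$ reaches $0$. On $[t_2,s]$ the bound of Stage 2 holds, and at $s$ we have $H_I'=-\delta H_I$ exactly. Then, using $V-V_I\le\overline V$, $\mu\ge\underline\mu$, $c_\delta'\ge0$ and $c_\delta(s)\ge c_\delta(t_1)$,
\[
Z'(s)\le \beta_{hv}\overline V\frac{h_\delta}{H_\star}H_I-\underline\mu\, c_\delta H_I+\delta c_\delta H_I\le H_I\Big[\beta_{hv}\overline V\frac{h_\delta}{H_\star}-(\underline\mu-\delta)c_\delta(t_1)\Big]\le-\tfrac12(\underline\mu-\delta)c_\delta(t_1)H_I<0 .
\]
The last inequality uses the second entry in the definition \eqref{eq:apx_hdelta} of $h_\delta$, together with $H_I(s)>0$, which follows from $H_I'\ge-\gamma H_I$. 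But $Z<0$ on $[t_2,s)$ and $Z(s)=0$ force $Z'(s)\ge0$, a contradiction. Hence no exit occurs, and \eqref{eq:apx_HI_decay} holds for all $t\ge t_2$.

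The delicate points are that this bound must not depend on $V$ after $t_2$ (only $\overline V$ enters), and that $c_\delta$ being time-dependent helps rather than hurts. Both are handled by the sign of $c_\delta'$.

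\emph{Conclusion.} Gronwall gives $H_I(t)\le H_I(t_2)e^{-\delta(t-t_2)}$. For $V_I$ I avoid $c_\delta(t)$, which could grow, and compare directly:
\[
V_I'\le\beta_{hv}\overline V H_I^2/H_\star-\underline\mu V_I .
\]
With the forcing term decaying like $e^{-2\delta(t-t_2)}$ and $\delta<\underline\mu$, variation of constants yields exponential decay of $V_I$.
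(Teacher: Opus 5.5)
Your proof is correct and follows the paper's argument essentially step for step. It uses the same entry estimate on $[t_1,t_2]$, the same wedge $V_I\le c_\delta(t)H_I$, the same first-exit computation for $Z=V_I-c_\delta H_I$ (relying on $c_\delta'\ge0$ and the second entry of $h_\delta$), and the same direct comparison for $V_I$. The only difference is that you dispose of the boundary $H_I=h_\delta$ by noting that $H_I$ is non-increasing inside the wedge, whereas the paper treats that boundary as a separate exit case (its Case 1) using the same inequality.
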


\begin{proof}

We divide the proof into two steps. First, we show that the low-vector period
drives $(H_I,V_I)$ into a shrinking-dominated region. Second, we show that
this region remains positively invariant after the low-vector period has
ended.

\medskip

\noindent
\textbf{Step 1. Entry into a shrinking-dominated region.}

For $t\geq t_1$, define
\begin{equation}
\Delta_\delta(t)
:=
\left\{
(h,v)\in\mathbb R_+^2:
0\leq h\leq h_\delta,
\;
0\leq v\leq c_\delta(t)h
\right\}.
\label{eq:apx_triangle}
\end{equation}

From equation~\eqref{eq:apx_admissible_VI},
\begin{equation}
0\leq V_I(t)\leq V(t).
\end{equation}
Hence, under
equation~\eqref{eq:apx_low_vector_period},
\begin{equation}
V_I(t)
<
\varepsilon_{\delta,\tau}
\qquad
\text{for all }t\in[t_1,t_2].
\end{equation}

Since $H_S$ is non-increasing,
\begin{align}
H_I'(t)
&=
\beta_{vh}V_I(t)H_S(t)
-
\gamma H_I(t)
\nonumber\\
&\leq
\beta_{vh}
\varepsilon_{\delta,\tau}
H_S(t_1)
-
\gamma H_I(t)
\end{align}
for $t\in[t_1,t_2]$.

Comparison with the corresponding linear equation gives
\begin{align}
H_I(t_2)
&\leq
e^{-\gamma\tau}
H_I(t_1)
\nonumber\\
&\quad
+
\frac{
\beta_{vh}
\varepsilon_{\delta,\tau}
H_S(t_1)
}{
\gamma
}
\left(
1-e^{-\gamma\tau}
\right).
\end{align}

By definition of $\tau_\delta$,
\begin{equation}
e^{-\gamma\tau}H_I(t_1)
\leq
\frac{h_\delta}{2},
\end{equation}
and by definition of $\varepsilon_{\delta,\tau}$,
\begin{equation}
\frac{
\beta_{vh}
\varepsilon_{\delta,\tau}
H_S(t_1)
}{
\gamma
}
\leq
\frac{h_\delta}{2}.
\end{equation}
Therefore
\begin{align}
H_I(t_2)
&\leq
\frac{h_\delta}{2}
+
\frac{h_\delta}{2}
\left(
1-e^{-\gamma\tau}
\right)
\nonumber\\
&=
h_\delta
\left(
1-\frac12e^{-\gamma\tau}
\right)
<h_\delta.
\label{eq:apx_HI_small_t2}
\end{align}

On the other hand,
\begin{equation}
H_I'(t)
\geq
-\gamma H_I(t),
\end{equation}
and hence
\begin{equation}
H_I(t_2)
\geq
H_I(t_1)e^{-\gamma\tau}
>0.
\label{eq:apx_HI_lower_t2}
\end{equation}

Using the definition of
$\varepsilon_{\delta,\tau}$ and the monotonicity of $c_\delta$,
\begin{align}
V_I(t_2)
&<
\varepsilon_{\delta,\tau}
\nonumber\\
&\leq
c_\delta(t_1)
H_I(t_1)
e^{-\gamma\tau}
\nonumber\\
&\leq
c_\delta(t_2)H_I(t_2).
\end{align}

Thus
\begin{equation}
0<H_I(t_2)<h_\delta,
\qquad
0\leq V_I(t_2)
<
c_\delta(t_2)H_I(t_2),
\label{eq:apx_entry_triangle}
\end{equation}
and consequently
\begin{equation}
(H_I(t_2),V_I(t_2))
\in
\Delta_\delta(t_2).
\end{equation}

\medskip

\noindent
\textbf{Step 2. Positive invariance of $\Delta_\delta(t)$.}

Since
\begin{equation}
h_\delta
\leq
\frac{H_\star}{2}
<
H_\star,
\end{equation}
the shrinking branch applies throughout $\Delta_\delta(t)$:
\begin{equation}
g(H_I)
=
\frac{H_I^2}{H_\star}.
\end{equation}

Suppose, by contradiction, that the solution leaves
$\Delta_\delta(t)$ after $t_2$. Define
\begin{equation}
t^\sharp
:=
\sup
\left\{
s\geq t_2:
(H_I(t),V_I(t))
\in
\Delta_\delta(t)
\text{ for every }t\in[t_2,s]
\right\},
\end{equation}
and assume
\begin{equation}
t^\sharp<+\infty.
\end{equation}

Since
\begin{equation}
H_I'(t)\geq-\gamma H_I(t)
\end{equation}
and $H_I(t_2)>0$, we have
\begin{equation}
H_I(t)>0
\end{equation}
for every finite $t\geq t_2$.

At $V_I=0$,
\begin{equation}
V_I'
=
\beta_{hv}g(H_I)V(t)
\geq0,
\end{equation}
so a first exit cannot occur through $V_I=0$. It also cannot occur through
$H_I=0$. Therefore, a first exit can occur only through
\begin{equation}
H_I=h_\delta
\end{equation}
or
\begin{equation}
V_I=c_\delta(t)H_I.
\end{equation}

\medskip

\noindent
\textit{Case 1.}
Suppose
\begin{equation}
H_I(t^\sharp)=h_\delta.
\end{equation}
At a first contact with this boundary,
\begin{equation}
H_I'(t^\sharp)\geq0.
\end{equation}
However,
\begin{align}
H_I'(t^\sharp)
&=
\beta_{vh}
V_I(t^\sharp)
H_S(t^\sharp)
-
\gamma h_\delta
\nonumber\\
&\leq
\beta_{vh}
c_\delta(t^\sharp)
h_\delta
H_S(t^\sharp)
-
\gamma h_\delta
\nonumber\\
&=
-\delta h_\delta
<0,
\end{align}
which is a contradiction.

\medskip

\noindent
\textit{Case 2.}
Suppose
\begin{equation}
V_I(t^\sharp)
=
c_\delta(t^\sharp)H_I(t^\sharp).
\end{equation}
Define
\begin{equation}
Z(t)
:=
V_I(t)-c_\delta(t)H_I(t).
\end{equation}
Since $Z(t)\leq0$ before the first exit and
$Z(t^\sharp)=0$,
\begin{equation}
Z'(t^\sharp)\geq0.
\label{eq:apx_Z_exit}
\end{equation}

Since
\begin{equation}
c_\delta'(t)\geq0,
\end{equation}
we have
\begin{align}
Z'(t^\sharp)
&=
V_I'(t^\sharp)
-
c_\delta(t^\sharp)H_I'(t^\sharp)
-
c_\delta'(t^\sharp)H_I(t^\sharp)
\nonumber\\
&\leq
V_I'(t^\sharp)
-
c_\delta(t^\sharp)H_I'(t^\sharp).
\end{align}

For readability, all quantities below are evaluated at $t^\sharp$. Using
\begin{equation}
V_I=c_\delta H_I,
\qquad
0\leq V_I\leq V\leq\overline V,
\qquad
\mu\geq\underline\mu,
\end{equation}
we obtain
\begin{align}
Z'
&\leq
\beta_{hv}
\frac{H_I^2}{H_\star}
(V-V_I)
-
\mu V_I
-
c_\delta
\left(
\beta_{vh}V_IH_S-\gamma H_I
\right)
\nonumber\\
&\leq
\beta_{hv}\overline V
\frac{H_I^2}{H_\star}
-
\underline\mu c_\delta H_I
-
\beta_{vh}c_\delta^2H_IH_S
+
\gamma c_\delta H_I.
\end{align}

By definition of $c_\delta$,
\begin{equation}
\beta_{vh}c_\delta(t)H_S(t)
=
\gamma-\delta,
\end{equation}
and hence
\begin{equation}
Z'(t^\sharp)
\leq
H_I(t^\sharp)
\left[
\beta_{hv}\overline V
\frac{
H_I(t^\sharp)
}{
H_\star
}
-
c_\delta(t^\sharp)
(\underline\mu-\delta)
\right].
\end{equation}

Since
\begin{equation}
H_I(t^\sharp)\leq h_\delta
\end{equation}
and $c_\delta$ is non-decreasing,
\begin{align}
Z'(t^\sharp)
&\leq
H_I(t^\sharp)
\left[
\beta_{hv}\overline V
\frac{h_\delta}{H_\star}
-
c_\delta(t_1)
(\underline\mu-\delta)
\right].
\end{align}
By definition of $h_\delta$,
\begin{equation}
\beta_{hv}\overline V
\frac{h_\delta}{H_\star}
\leq
\frac12
c_\delta(t_1)
(\underline\mu-\delta),
\end{equation}
so
\begin{equation}
Z'(t^\sharp)
\leq
-\frac12
c_\delta(t_1)
(\underline\mu-\delta)
H_I(t^\sharp)
<0.
\end{equation}
This contradicts
equation~\eqref{eq:apx_Z_exit}.

Both possible first-exit mechanisms lead to a contradiction. Therefore,
\begin{equation}
(H_I(t),V_I(t))
\in
\Delta_\delta(t)
\qquad
\text{for all }t\geq t_2.
\label{eq:apx_triangle_invariant}
\end{equation}

In particular,
\begin{align}
H_I'(t)
&=
\beta_{vh}V_I(t)H_S(t)
-\gamma H_I(t)
\nonumber\\
&\leq
\beta_{vh}
c_\delta(t)
H_I(t)H_S(t)
-\gamma H_I(t)
\nonumber\\
&=
-\delta H_I(t).
\end{align}
Hence
\begin{equation}
H_I(t)
\leq
H_I(t_2)
e^{-\delta(t-t_2)}
\leq
h_\delta
e^{-\delta(t-t_2)}.
\label{eq:apx_HI_exponential}
\end{equation}

Finally,
\begin{align}
V_I'(t)
&=
\beta_{hv}
\frac{H_I(t)^2}{H_\star}
\left(
V(t)-V_I(t)
\right)
-
\mu(t)V_I(t)
\nonumber\\
&\leq
\beta_{hv}\overline V
\frac{h_\delta^2}{H_\star}
e^{-2\delta(t-t_2)}
-
\underline\mu V_I(t).
\end{align}
Therefore,
\begin{align}
V_I(t)
&\leq
V_I(t_2)
e^{-\underline\mu(t-t_2)}
\nonumber\\
&\quad
+
\frac{
\beta_{hv}\overline Vh_\delta^2
}{
H_\star
}
\int_{t_2}^{t}
e^{-\underline\mu(t-s)}
e^{-2\delta(s-t_2)}
\,ds.
\label{eq:apx_VI_exponential_bound}
\end{align}

Writing
\begin{equation}
x=t-t_2,
\end{equation}
the convolution term is
\begin{equation}
\int_{t_2}^{t}
e^{-\underline\mu(t-s)}
e^{-2\delta(s-t_2)}
\,ds
=
\begin{cases}
\dfrac{
e^{-2\delta x}
-
e^{-\underline\mu x}
}{
\underline\mu-2\delta
},
& \text{if }\underline\mu\neq2\delta,
\\[10pt]
x e^{-\underline\mu x},
& \text{if }\underline\mu=2\delta.
\end{cases}
\end{equation}
In either case, there exist constants $C>0$ and $\kappa>0$ such that
\begin{equation}
V_I(t)
\leq
C e^{-\kappa(t-t_2)}
\qquad
\text{for all }t\geq t_2.
\end{equation}
For example, any
\begin{equation}
0<\kappa<
\min\{
\underline\mu,
2\delta
\}
\end{equation}
is admissible after adjusting $C$.

Thus both infectious compartments converge exponentially to zero.
\end{proof}

\begin{remark}
If
\begin{equation}
H_I(t_1)=V_I(t_1)=0,
\end{equation}
the system is already disease-free and remains so in the absence of pathogen
reintroduction. The assumption $H_I(t_1)>0$ in
Proposition~\ref{prop:interseasonal} therefore concerns the non-trivial case
in which a residual infectious host population remains at the beginning of
the low-vector period.
\end{remark}

\begin{remark}
The conclusion of Proposition~\ref{prop:interseasonal} is stronger than
extinction during the low-vector period itself. Once $(H_I,V_I)$ has entered
the invariant region $\Delta_\delta(t)$, the infectious compartments continue
to decay even if the prescribed vector abundance subsequently increases,
provided the global bound in
equation~\eqref{eq:apx_global_bounds} remains satisfied.
\end{remark}

\begin{remark}
The threshold $\varepsilon_{\delta,\tau}$ is a sufficient and deliberately
conservative bound. In particular, it depends on $\tau$ through the factor
$e^{-\gamma\tau}$. Proposition~\ref{prop:interseasonal} should therefore be
interpreted as stating that, for every sufficiently long low-vector period,
there exists a sufficiently small vector-abundance threshold ensuring entry
into the invariant extinction region. It does not imply that increasing
$\tau$ necessarily relaxes the required upper bound on $V(t)$.
\end{remark}

\clearpage
\renewcommand{\theequation}{S\arabic{equation}}
\renewcommand\thefigure{S.\arabic{figure}}   
\setcounter{figure}{0} 

\section*{Modelling vector-borne disease dynamics with externally prescribed vector abundance\\---------\\Supplementary figure}

\begin{figure}[h]
     \centering
     \begin{subfigure}[b]{0.32\textwidth}
         \centering
         \includegraphics[width=\textwidth]{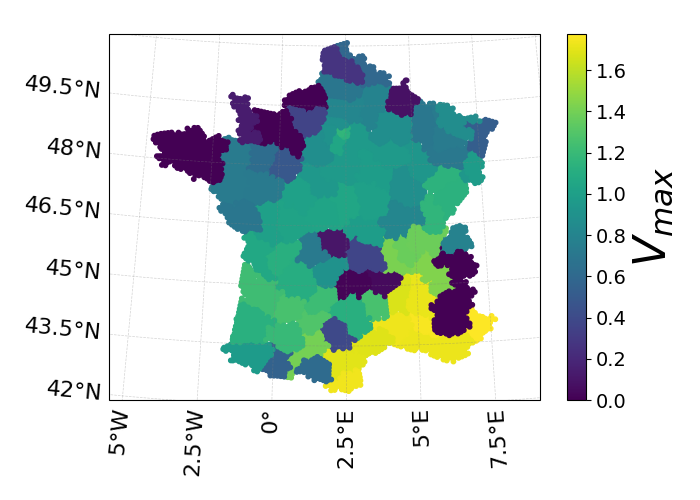}
         \caption{}
     \end{subfigure}
     \hfill
     \begin{subfigure}[b]{0.32\textwidth}
         \centering
         \includegraphics[width=\textwidth]{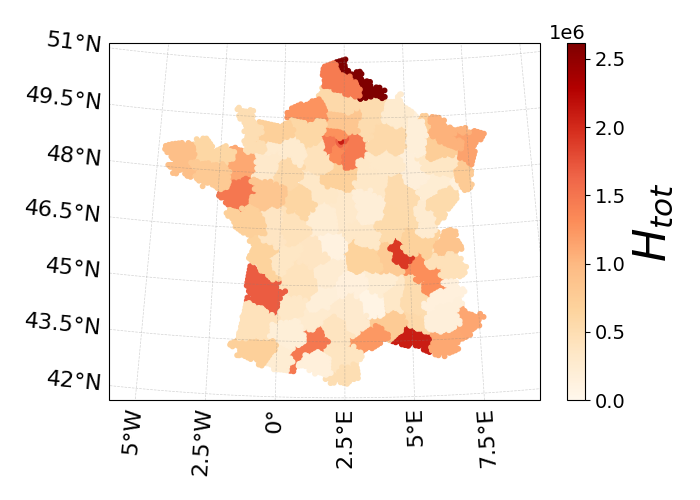}
         \caption{}
     \end{subfigure}
     \hfill
     \begin{subfigure}[b]{0.32\textwidth}
         \centering
         \includegraphics[width=\textwidth]{DISTIGRI_NRtot.png}
         \caption{}
     \end{subfigure}
        \caption{\textit{
        (a) : \textbf{Maximum number of vectors} per DISTIGRI trajectory per French continental department between July 24$^{th}$, 2024 ($t_0$) and November 2$^{nd}$, 2024. Initial condition : ECDC data in Sept. 2022 \cite{ECDC_Aedes_map}.\\
        (b) : \textbf{Population size} in each French continental department \cite{insee_recensement_2023}.
        \\(c) \textbf{Total number of infected hosts over the epidemic period $N_R(T)$} in each French continental department, with shrinking adjustment, based on DISTIGRI outputs, with introduction of one infected host per department at $t_0:$ July 24$^{th}$, 2024 with $\gamma = 1/5$, $\beta_{hv}=0.02$, $\beta_{vh}=0.008$}}.
        \label{supfig:maps}
\end{figure}

\end{appendices}
\end{document}